\newcommand{\E}[1]{\textbf{E}\left[#1\right]}
\newcommand{\Prob}[1]{\mbox{Pr}\left\{#1\right\}}
\newcommand{\Probtext}[1]{\mbox{Pr}\left\{\mbox{#1}\right\}}
\newtheorem{thm}{Theorem}
\newtheorem{lem}[thm]{Lemma}
\newlength{\parindentsave}\setlength{\parindentsave}{\parindent}
\newcommand{\proofend}{\hspace*{\fill}\mbox{$\Box$}}
\title{On random $k$-out subgraphs of large graphs}
\author{Alan Frieze\thanks{Research supported in part by NSF grant ccf1013110} \ \ \  Tony Johansson\\ Department of Mathematical Sciences\\Carnegie Mellon University\\Pittsburgh PA 15213\\U.S.A.}
\def\ai{A_{\pmb{\iota}}}
\def\bi{B_{\pmb{\iota}}}
\def\ci{C_{\pmb{\iota}}}
\def\ti{T_{\pmb{\iota}}}
\def\si{S_{\pmb{\iota}}}
\def\ri{R_{\pmb{\iota}}}
\def\pathi{P_{\pmb{\iota}}}
\def\S{\Sigma}
\def\a{\alpha}
\def\d{\delta}
\def\D{\Delta}
\def\e{\varepsilon}
\def\f{\phi}
\def\F{\Phi}
\def\G{\Gamma}
\def\k{\kappa}
\def\r{\rho}
\def\om{\omega}
\def\w{\omega}
\def\cD{{\cal D}}
\def\cA{{\cal A}}
\newcommand{\ignore}[1]{}
\newcommand{\rdown}[1]{\mbox{$\left\lfloor #1 \right\rfloor$}}
\def\DFS{\D\F\S}
\def\cE{\mathcal{E}}
\def\cG{{\cal G}}
\newcommand{\brac}[1]{\left( #1 \right)}
\newcommand{\expect}{\operatorname{\bf E}}
\def\E{\expect}
\renewcommand{\Pr}{\operatorname{\bf Pr}}
\newcommand\bfrac[2]{\left[\frac{#1}{#2}\right]}
\def\cH{{\cal H}}
\newcommand{\set}[1]{\left\{#1\right\}}
\begin{document}

\maketitle
\begin{abstract}
We consider random subgraphs of a fixed graph $G=(V,E)$ with large minimum degree. We fix a positive integer $k$ and let $G_k$ be the random subgraph where each $v\in V$ independently chooses $k$ random neighbors, making $kn$ edges in all. When the minimum degree $\d(G)\geq (\frac12+\e)n,\,n=|V|$ then $G_k$ is $k$-connected w.h.p. for $k=O(1)$; Hamiltonian for $k$ sufficiently large. When $\d(G) \geq m$, then $G_k$ has a cycle of length $(1-\e)m$ for $k\geq k_\e$. By w.h.p. we mean that the probability of non-occurrence can be bounded by a function $\f(n)$ (or $\f(m)$) where $\lim_{n\to\infty}\f(n)=0$.
\end{abstract}
\section{Introduction}
The study of random graphs since the seminal paper of Erd\H{o}s and R\'enyi \cite{ER} has by and large been restricted to analysing random subgraphs of the complete graph. This is not of course completely true. There has been a lot of research on random subgraphs of the hypercube and grids (percolation). There has been less research on random subgraphs of arbitrary graphs $G$, perhaps with some simple properties.

In this vain, the recent result of Krivelevich, Lee and Sudakov \cite{KLS} brings a refreshing new dimension. They start with an arbitrary graph $G$ which they assume has minimum degree at least $k$. For $0\leq p\leq 1$ we let $G_p$ be the random subgraph of $G$ obtained by independently keeping each edge of $G$ with probability $p$. Their main result is that if $p=\om/k$ then $G_p$ has a cycle of length $(1-o_k(1))k$ with probability $1-o_k(1)$. Here $o_k(1)$ is a function of $k$ that tends to zero as $k\to\infty$. Riordan \cite{Rio} gave a much simpler proof of this result. Krivelevich and Samotij \cite{KS} proved the existence of long cycles for the case where $p\geq \frac{1+\e}{k}$ and $G$ is $\cH$-free for some fixed set of graphs $\cH$. Frieze and Krivelevich \cite{FK} showed that $G_p$ is non-planar with probability $1-o_k(1)$ when $p\geq \frac{1+\e}{k}$ and $G$ has minimum degree at least $k$. In related works, Krivelevich, Lee and Sudakov \cite{KLS1} considered a random subgraph of a ``Dirac Graph'' i.e. a graph with $n$ vertices and minimum degree at least $n/2$. They showed that if $p\geq \frac{C\log n}{n}$ for suffficently large $n$ then $G_p$ is Hamiltonian with probability $1-o_n(1)$.

The results cited above can be considered to be generalisations of
classical results on the random graph $G_{n,p}$, which in the above
notation would be $(K_n)_p$. In this paper we will consider 
generalising another model of a random graph that we will call
$K_n(k-out)$. This has vertex set $V=[n]=\set{1,2,\ldots,n}$ and each
$v\in V$ independently chooses $k$ random vertices as neighbors. Thus
this graph has $kn$ edges and average degree $2k$. This model in a
bipartite form where the two parts of the partition restricted their
choices to the opposing half was first considered by Walkup \cite{W}
in the context of perfect matchings. He showed that $k\geq 2$ was
sufficient for bipartite $K_{n,n}(k-out)$ to contain a perfect
matching. Matchings in $K_n(k-out)$ were considered by Shamir and
Upfal \cite{SU} who showed that $K_n(5-out)$ has a perfect matching
w.h.p., i.e. with probability $1-o(1)$ as $n\to\infty$. Later, Frieze
\cite{F} showed that $K_n(2-out)$ has a perfect matching w.h.p. Fenner
and Frieze \cite{FF} had earlier shown that $K_n(k-out)$ is
$k$-connected w.h.p. for $k\geq 2$. After several weaker results,
Bohman and Frieze \cite{BF} proved that $K_n(3-out)$ is Hamiltonian
w.h.p. To generalise these results and replace $K_n$ by an arbitrary
graph $G$ we will define $G(k-out)$ as follows: We have a fixed graph
$G=(V,E)$ and each $v\in V$ independently chooses $k$ random
neighbors, from its neighbors in $G$. It will be convenient sometimes
to assume that each $v$ makes its choices with replacement and
sometimes not. It has no real bearing on the results obtained and we
will indicate our choice here. To avoid cumbersome notation, we will from now on assume that $G$ has $n$ vertices and we will refer to $G(k-out)$ as $G_k$. We implicitly consider $G$ to be one of a sequence of larger and larger graphs with $n\to\infty$. We will say that events occur w.h.p. if their probability of non-occurrence can be bounded by a function that tends to zero as $n\to \infty$. 

For a vertex $v\in V$ we let $d_G(v)$ denotes its degree in $G$. Then
we let $\d(G)=\min_{v\in V}\d_G(v)$. We will first consider what we
call Strong Dirac Graphs (SDG) viz graphs with $\d(G)\geq \brac{\frac12+\e}n$ where $\e$ is an arbitrary positive constant.
\begin{thm}\label{th1}
Suppose that $G$ is an SDG. Then w.h.p. $G_k$ is $k$-connected for $2\leq k=O(1)$.
\end{thm}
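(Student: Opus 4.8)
Suppose each vertex makes its $k$ choices without replacement, so every vertex has degree at least $k$ in $G_k$; then the only obstruction to $k$-connectivity is a vertex cut of size at most $k-1$. Thus it suffices to prove that w.h.p.\ there is no partition $V=A\cup S\cup B$ with $A,B\neq\emptyset$, $|S|\le k-1$ and no edge of $G_k$ joining $A$ to $B$; by symmetry we may take $1\le a:=|A|\le|B|$, so $a\le n/2$. I would do this by a union bound over all such triples, split according to the size $a$ of the smaller side.

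Fix a triple $(A,S,B)$. The choices of distinct vertices are independent, and there is no edge between $A$ and $B$ precisely when every $v\in A$ chooses inside $A\cup S$ and every $u\in B$ chooses inside $B\cup S$; hence, writing $N_k(v)$ for the set chosen by $v$,
\[
\Pr\{\text{no }A\text{-}B\text{ edge}\}=\prod_{v\in A}\Pr\{N_k(v)\subseteq A\cup S\}\cdot\prod_{u\in B}\Pr\{N_k(u)\subseteq B\cup S\}.
\]
Since $G$ is an SDG, $v\in A$ has at most $|A\cup S|-1\le a+k-2$ of its $d_G(v)\ge(\tfrac12+\e)n$ neighbours in $A\cup S$, so $\Pr\{N_k(v)\subseteq A\cup S\}\le\brac{(a+k-2)/(\tfrac12+\e)n}^{k}$; and writing $y_w=|N_G(w)\cap A|/d_G(w)$ we have $\Pr\{N_k(w)\subseteq V\setminus A\}\le(1-y_w)^k$ for $w\in B$, with $\sum_{w\in B}|N_G(w)\cap A|=e_G(A,B)\ge a((\tfrac12+\e)n-a-k)$.

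\textbf{Small cuts ($2\le a\le\a_0 n$, for a small constant $\a_0=\a_0(\e,k)>0$).} Keeping only the $A$-factors, $\Pr\{\text{no }A\text{-}B\text{ edge}\}\le\brac{(a+k-2)/(\tfrac12+\e)n}^{ka}$, and there are at most $\binom na\,O(n^{k-1})$ triples with $|A|=a$. A routine computation using $\binom na\le(en/a)^a$ shows the total over this range is $o(1)$: for constant $a$ it is $O(n^{a+k-1-ka})=O(n^{-(k-1)(a-1)})$, and the geometric-type bound remains $o(1)$ throughout $2\le a\le\a_0 n$ once $\a_0$ is small. One can enlarge $\a_0$ by also retaining the $B$-factors and applying the arithmetic--geometric mean inequality, which supplies an extra factor $e^{-\Omega(\e k a)}$; if $k$ is large relative to $\e$ this even lets one push $\a_0$ up to $1/2$, so the remaining difficulty is specific to small $k$.

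\textbf{Linear cuts ($\a_0 n\le a\le n/2$) --- the main obstacle.} Here $A$ and $B$ are both linear in $n$: each $v\in A$ has $\ge\e n-k$ neighbours in $B$ and at most $a+k-2$ in $A\cup S$, so $\Pr\{N_k(v)\subseteq A\cup S\}$ can be as large as about $(1+2\e)^{-k}$, and the same holds on the $B$-side; thus the product above is at most roughly $(1+2\e)^{-\Theta(kn)}$, and this cannot be improved by much --- even with AM--GM --- when $a$ is close to $n/2$. But the number of candidate pairs $(A,S)$ is $2^{\Theta(n)}$, and $(1+2\e)^{-\Theta(kn)}$ does not overcome $2^{\Theta(n)}$ when $\e k$ is small (e.g.\ $k=2$ and $\e$ small). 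So a pure first-moment argument fails in this range, and this is where the real content of the proof must lie: the Dirac hypothesis has to be exploited more structurally than through the bound on $d_G(v)$ alone. I expect the right route to be a two-round exposure: write $G_k=G_{k_1}\cup G_{k_2}$ with $k_1+k_2=k$, reveal $G_{k_1}$ first, and use that a $k_1$-out subgraph of a graph with minimum degree $\ge(\tfrac12+\e)n$ has (the analogue of) a giant component covering more than $n/2$ vertices; a linear cut of $G_k$ must then cut this component, which the second round $G_{k_2}$ can be made to preclude. Making this precise --- and, when $k_1=1$, controlling the component structure of a ``$1$-out'' graph on a dense host --- is the technical heart of the argument. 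Granting it, together with the small-cut estimate and the automatic minimum degree, we conclude that $G_k$ is $k$-connected w.h.p.
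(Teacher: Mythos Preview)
Your small-cut calculation is fine, and you correctly diagnose that a first-moment argument breaks down for linear-size cuts when $\e k$ is small. But the proof then stops: the linear-cut range is left to a one-line sketch (``two-round exposure, giant in $G_{k_1}$, second round precludes cuts'') that you explicitly grant without proof. This is the whole theorem --- the small-$a$ part is routine --- so as written there is a genuine gap. Even the sketch is not quite on target: showing that $G_{k_1}$ has \emph{a} giant of size $>n/2$ does not rule out a linear-size $A$ consisting of other components, so you would still need to prove that \emph{all} components of $G_{k_1}$ are linear (and remain so after deleting $k-1$ vertices), and then argue that the second round joins them into a $k$-connected whole. None of this is carried out.

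The paper's route is structurally different and avoids the two-round exposure entirely. It first extracts a \emph{hub set} $L$ of size $O(\log n)$ with the property that every pair of vertices in $V\setminus L$ has $\Theta(\log n)$ common $G$-neighbours in $L$; this is a deterministic consequence of the SDG hypothesis obtained by a random sample. It then runs your small-cut union bound \emph{only on $G_k'=G_k[V\setminus L]$}: because $L$ is so small, the calculation you did for $2\le a\le \alpha_0 n$ in fact goes all the way up to $a=cn$ for a fixed $c$, yielding that every component of $G_k'$ has size $\ge cn$, and that this persists after deleting any $k-1$ vertices (up to isolated vertices, which the minimum-degree condition rules out). Finally, the vertices of $L$ are used as length-$2$ connectors: each $w\in L$ independently picks into the big components, and a Hoeffding bound plus a birthday estimate gives $k$ internally vertex-disjoint $J_i$--$J_j$ paths through $L$ for every pair of components. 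The hub set is the missing idea in your proposal; it converts the hopeless linear-cut union bound into a bounded-size matching problem between $O(1)$ components and $O(\log n)$ hubs.
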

\begin{thm}\label{th2}
Suppose that $G$ is an SDG. Then there exists a constant $k_\e$ such that if $k\geq k_\e$ then $G_k$ is Hamiltonian.
\end{thm}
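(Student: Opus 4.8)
The plan is to use the standard rotation-extension / Pósa technique, but to exploit the SDG condition to control expansion of sets in $G_k$. First I would note that $G_k$ contains, as a subgraph with the same distribution conditioned on fewer choices, two independent copies $G_k'$ and $G_k''$ of $G_{k'}$ with $k' = k/2$; the first copy will be used to guarantee a long path via rotations, and the second as a reservoir of ``booster'' edges to close cycles. The key structural lemma I would aim for is an expansion statement: for a suitable constant $\beta=\beta(\e)>0$ and $k$ large, \whp every set $S$ with $|S|\le \beta n$ satisfies $|N_{G_k}(S)|\ge 2|S|$. This is where the SDG hypothesis does the work — since every vertex has at least $(\frac12+\e)n$ neighbors in $G$, a set $S$ of size $s$ has at least $(\frac12+\e)n$ potential neighbors for each of its vertices to choose from, and a first-moment/union-bound calculation over all pairs $(S,T)$ with $|T|< 2|S|$ shows that the probability that all $ks$ choices out of $S$ land in $S\cup T$ decays fast enough provided $s\le\beta n$; the $\frac12$ threshold ensures that for larger $s$ the neighborhoods are automatically huge (any two sets of size $>n/2$ intersect), so small sets are the only concern.

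Granting the expansion property, I would run Pósa's rotation-extension argument inside $G_k'$: take a longest path $P$, and if it is not Hamiltonian (or does not already yield a Hamilton cycle after closing), perform rotations keeping one endpoint fixed. The set of endpoints reachable by sequences of rotations, call it $\mathcal{E}$, is an expanding set in the Pósa sense, so $|\mathcal{E}|\ge \beta n$ by the expansion lemma, and likewise the set of endpoints of the rotated paths through each such endpoint is large. Then I would use the second copy $G_k''$: because $G_k''$ is a genuinely random $k'$-out graph, conditioned on everything in $G_k'$, the probability that $G_k''$ contains \emph{no} edge between the two relevant $\Omega(n)$-sized endpoint sets is at most $(1-\Omega(1/n))^{\Omega(n)}\to 0$ — actually, to get a \whp-statement rather than just ``with probability $1-o_k(1)$'', I would iterate: each failed attempt to close or extend exposes only a bounded number of edges of $G_k''$, there are $O(n)$ such stages, and a union bound over stages plus a Chernoff-type estimate on the number of useful $G_k''$-edges incident to the endpoint sets gives failure probability $o(1)$. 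Finally, once the longest path can always be closed into a cycle of the same length, a standard argument (if the cycle is not Hamiltonian, connectivity — guaranteed by Theorem \ref{th1} for $G_k'$, which has $k'=O(1)\ge 2$ — lets us find a longer path, contradiction) upgrades the long cycle to a Hamilton cycle.

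The main obstacle I expect is quantitative: obtaining the expansion lemma with failure probability genuinely $o(1)$ (not merely $o_k(1)$) uniformly over all small sets. The union bound has $\binom{n}{s}\binom{n}{2s}$ terms of weight roughly $\big(\tfrac{s+2s}{(1/2+\e)n}\big)^{k's}$, and for this to beat the binomial coefficients one needs $k'$ larger than some explicit function of $\e$ (this is exactly the source of $k_\e$), and one must check the bound is summable over $s$ from $1$ to $\beta n$ — the $s=O(1)$ terms and the $s=\Theta(n)$ terms need slightly different treatment, and the ``with replacement vs.\ without'' choice must be pinned down so that the probability a fixed vertex sends all $k'$ of its darts into a fixed set of size $\le 3s$ is cleanly bounded. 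A secondary technical point is bookkeeping the conditioning between $G_k'$ and $G_k''$ across the $O(n)$ rotation phases so that the edges of $G_k''$ exposed remain ``fresh''; the cleanest route is to reserve, for each of the $O(n)$ phases, a disjoint batch of $G_k''$-choices, which costs only a constant factor in $k$.
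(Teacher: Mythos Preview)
Your overall strategy---an expansion lemma plus P\'osa rotations plus a reserved supply of booster edges---is the right intuition, and your expansion lemma is essentially the paper's Lemma~\ref{lem:expand}. But the booster-accounting step contains a real gap. You write that the cleanest route is to ``reserve, for each of the $O(n)$ phases, a disjoint batch of $G_k''$-choices, which costs only a constant factor in $k$''; this is simply false, since $O(n)$ disjoint batches of even one choice each would force $k=\Omega(n)$. More generally, any scheme that exposes $O(1)$ booster edges per phase, over the $\Theta(n)$ phases needed to climb from a path of length $\Theta(n)$ to a Hamilton cycle, incurs a constant failure probability per phase that neither a union bound nor a Chernoff estimate can defeat: the total budget of $k''n$ booster edges is only linear in $n$, and each edge is usable only at phases where its source vertex happens to lie in the current (shifting) endpoint set. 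This is precisely the obstacle that makes $k$-out Hamiltonicity harder than the $G_{n,p}$ sprinkling argument you seem to be modelling it on.

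The paper sidesteps iteration entirely with a double-counting argument (in the spirit of the proofs for $K_n(k\text{-out})$). One colours the $k$ out-arcs of each vertex with $k-1$ green and $1$ blue, and counts pairs $(D_i,j)$ with $D_i$ non-Hamiltonian and expanding for which the green subgraph already contains a longest path of $D_i$. Each such $D_i$ contributes at least $(k-1)^n$ pairs; on the other hand, for a fixed green $(k-1)$-out graph $\Delta$, the probability (over the single blue edge per vertex) that no booster appears is at most $e^{-c_\e n}$ for some constant $c_\e>0$. This last bound is where the SDG hypothesis does its real work, via a Type~I / Type~II split of endpoints (Lemmas~\ref{lem:n1bound} and~\ref{lem:n2bound}): endpoints with many $G$-neighbours off the longest path allow direct extension, while endpoints with almost all $G$-neighbours on the path require a more delicate cycle-closing argument exploiting that two such endpoints together have $\e n$ suitable common neighbours along the path. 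Your sketch does not address this dichotomy, and indeed since the expansion constant $\a$ is tiny one cannot assume $|EP(x)\cap N_G(x)|$ is large for a generic endpoint $x$. Combining the two counts, the fraction of non-Hamiltonian expanding $k$-out graphs is at most $e^{-c_\e n}/(1-1/k)^n$, which is $o(1)$ once $k\ge k_\e$; the double-counting absorbs all would-be phases into a single exponential bound, and this is the idea your proposal is missing.
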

Note that we need $\e>0$ in order to prove these results. Consider for
example the case where $G$ consists of two copies of $K_{n/2}$ plus a
perfect matching $M$ between the copies. In this case there is a
probability greater than or equal to $\brac{1-\frac{2k}n}^{n/2}\sim e^{-k}$ that no edge of $M$ will occur in $G_k$.

We will next turn to graphs with large minimum degree.
\begin{thm}\label{th3}
Suppose that $G$ has minimum degree $m$ where $m\to\infty$ with $n$. For every $\e>0$ there exists a constant $k_\e$ such that if $k\geq k_\e$ then w.h.p. $G_k$ contains a path of length $(1-\e)m$.
\end{thm}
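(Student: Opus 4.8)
The plan is to reduce the statement about an arbitrary graph $G$ with minimum degree $m$ to something resembling the known result of Krivelevich, Lee and Sudakov on $G_p$ (cited above as \cite{KLS}), by coupling $G_k$ with an appropriate edge-percolation. First I would observe that each vertex $v$ chooses $k$ of its $\geq m$ neighbors, so the probability that a fixed edge $\{u,v\}$ with $u\in N_G(v)$ is selected by $v$ is roughly $k/d_G(v)\leq k/m$, but at least $k/n$ is too weak — the right comparison is local. The cleanest approach: restrict attention to a subgraph $H\subseteq G$ in which every vertex has degree exactly (or at most) something like $m$, by having each vertex keep only $m$ of its neighbors (say the $m$ smallest-indexed ones); then $\d(H)\ge$ ... actually this can destroy degrees, so instead I would work directly with $G$ and note that $v$'s $k$ choices among its $d_G(v)\ge m$ neighbors stochastically dominate a process where each neighbor is included independently with probability $p=c k/m$ for a suitable constant $c<1$ (using that $k=o(m)$, which holds since $m\to\infty$ and $k=k_\e$ is fixed). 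Thus $G_k$ contains a copy of $G_p$ with $p=\Theta(k/m)$, i.e. $p = \omega/m$ with $\omega=\omega(k)\to\infty$ as $k\to\infty$.

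Once we have $G_p$ with $p=\omega/m$ sitting inside $G_k$ and $G$ has minimum degree $m$, the result of \cite{KLS} (long cycle of length $(1-o_k(1))m$ in $G_p$ with probability $1-o_k(1)$) almost gives what we want, but there are two gaps to close. First, \cite{KLS} gives a statement with probability $1-o_k(1)$, whereas we want w.h.p.\ in $m$ (equivalently in $n$); so I would need either a version of that argument with an $m$-dependent error probability, or — more in keeping with the present paper's other proofs — rerun a DFS-based argument (in the style of Riordan \cite{Rio}) directly on $G_k$, which naturally yields a probability of failure bounded by a function of $m$ tending to $0$. Second, \cite{KLS} produces a cycle while we only claim a path; a long path is weaker, so this is not an obstacle — indeed the DFS argument produces a long path directly.

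Concretely, the key steps in order are: (1) set up the stochastic domination of $G_k$ by $G_p$ with $p = ck/m$, being careful about the ``with replacement'' convention and about vertices of degree much larger than $m$ (for those, $p$ can only be smaller, which is fine since long paths are monotone); (2) run a depth-first search on the edge set of $G_p$, exploring from an arbitrary start, and track the sizes of the three DFS sets (explored-and-done, on-stack, untouched); (3) use the minimum-degree hypothesis: as long as the untouched set has size $\ge \e m$, the current top-of-stack vertex has $\ge \e m$ neighbors outside the stack, each an edge present with probability $p$, so the stack grows; a standard concentration argument (Chernoff on the number of ``successful'' probes) shows that once $\omega = ck$ is large enough the stack reaches depth $(1-\e)m$ before the untested edges run out, except with probability exponentially small in $m$; (4) conclude that the DFS stack at that moment is a path of length $(1-\e)m$ in $G_p\subseteq G_k$.

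I expect the main obstacle to be step (3) made quantitative: one must show that the DFS reaches a deep stack \emph{with high probability in $m$}, not merely with probability $1-o_k(1)$. The subtlety is that early in the search the on-stack set is small, so conditioning on its neighborhood structure in $G$ is delicate — one cannot simply say ``$\e m$ untouched neighbors'' until enough vertices have been processed. The fix is the usual one: show that within the first $O(1/\,p)=O(m/k)$ steps the stack, \emph{with probability $1-e^{-\Omega(k)}$}, either already has depth $\ge \e m$ or has accumulated a linear-in-$m$ surplus of successful probes; then a coupling with a biased random walk that has a positive drift of order $\e k$ finishes the job with failure probability $e^{-\Omega(\e^2 k \cdot m/k)} = e^{-\Omega(\e^2 m)}$. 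Choosing $k_\e$ large enough to absorb the constants completes the proof. \proofend
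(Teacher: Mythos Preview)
Your reduction to $G_p$ has a real gap. You claim that $v$'s $k$ choices among its $d_G(v)\geq m$ neighbors stochastically dominate independent inclusion with probability $p=ck/m$, but this fails whenever $d_G(v)\gg m$: a $k$-out choice hits each fixed neighbor with probability about $k/d_G(v)$, which can be as small as $k/n$, far below $ck/m$, and moreover one can never couple a set of size exactly $k$ to contain a Bernoulli set whose expected size $p\,d_G(v)=ck\,d_G(v)/m$ may greatly exceed $k$. Your parenthetical ``$p$ can only be smaller, which is fine since long paths are monotone'' points in the wrong direction: if the effective edge-probability coming out of $G_k$ is \emph{smaller} than $p$, then $G_k$ does \emph{not} contain a copy of $G_p$, and a long path in $G_p$ says nothing about $G_k$. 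There is no uniform-$p$ comparison to be had when degrees in $G$ are unbounded relative to $m$, so the KLS/Riordan result for $G_p$ cannot be invoked as a black box.

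The paper avoids this entirely by running DFS directly on the digraph $D_k$ of out-choices (the option you mention but do not develop), and the argument is much shorter than your random-walk sketch because of a bookkeeping trick you are missing. One exposes the out-choices of the top-of-stack vertex $u$ one at a time; while $|S\cup U|\leq (1-\e/2)m$, each choice is a hit with probability at least $1-|S\cup U|/d_G(u)\geq 1-(1-\e/2)m/m=\e/2$, \emph{regardless of how large $d_G(u)$ is} --- this is exactly where the direct approach succeeds and the $G_p$ coupling fails. After $\e km$ total choices a Chernoff bound gives at least $(1-\e)m$ hits, hence $|S\cup U|\geq (1-\e)m$. The key observation replacing your drift analysis is that a vertex lands in $S$ only after spending all $k$ of its out-edges, so after $\e km$ choices one has $|S|\leq \e km/k=\e m$ deterministically; thus $|U|\geq (1-2\e)m$, and $U$ is the vertex set of a path. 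No biased-walk coupling and no delicate ``early steps'' issue: the fixed out-degree budget bounds $|S|$ for free.
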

Using this theorem as a basis, we strengthen it and prove the existence of long cycles.
\begin{thm}\label{th4}
Suppose that $G$ has minimum degree $m$ where $m\to\infty$ with $n$. For every $\e>0$ there exists a constant $k_\e$ such that if $k \geq k_\e$ then w.h.p. $G_k$ contains a cycle of length $(1 - \e)m$.
\end{thm}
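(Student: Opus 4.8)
The plan is to prove Theorem~\ref{th4} by sprinkling: spend most of the $k$ choices producing a long \emph{path} via Theorem~\ref{th3}, and use the rest to splice its ends together into a cycle. Write $k=k_1+k_2$, where $k_1=k_{\e/4}$ is the constant furnished by Theorem~\ref{th3} for the parameter $\e/4$ and $k_2$ is a large constant fixed at the end. Generate $G_k$ in two independent rounds: in round one every vertex picks $k_1$ neighbours, giving $G_{k_1}$; in round two every vertex picks a further $k_2$ neighbours. By Theorem~\ref{th3}, w.h.p. $G_{k_1}$ has a path of length at least $(1-\e/4)m$, so a longest path $P=v_0v_1\cdots v_\ell$ of $G_{k_1}$ has $\ell\ge(1-\e/4)m$ and, by maximality, every $G_{k_1}$-neighbour of $v_0$ or of $v_\ell$ lies on $P$.

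I would then use the second round to run a rotation--extension process. Repeatedly, given the current longest path $Q=w\cdots w'$ on the vertex set $V(P)$ with endpoint $w$, expose the round-two choices of $w$: if one lands outside $V(Q)$, lengthen $Q$ with it (this happens at most $n$ times and is always progress, so the process halts); otherwise the at most $k_2$ chords from $w$ into $Q$ produce new endpoints by P\'osa rotations. Running this at both ends yields a family of paths of length at least $(1-\e/4)m$ on one common vertex set, and it is enough to locate among them a path $u_0u_1\cdots u_r$ with a round-two edge $u_0u_j$, $j\ge r-\tfrac{\e m}{2}$: then $u_0u_1\cdots u_ju_0$ is a cycle of length at least $r-\tfrac{\e m}{2}+1\ge(1-\e)m$, the point being that the $\tfrac{3\e m}{4}$ of slack lets us discard a short initial and final stretch. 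The minimum-degree hypothesis is what forces this to happen: for an endpoint $w$ reached by the process, either some round-two neighbour of $w$ lies off $Q$ (extension), or one lies in the last $\tfrac{\e m}{2}$ vertices of $Q$ (a closing chord), or \emph{all} $k_2$ round-two neighbours of $w$ --- essentially uniform among the $\ge m$ neighbours of $w$ in $G$ --- avoid a prescribed set of $\Omega(\e m)$ vertices, an event of probability bounded away from $1$; rotations multiply the number of endpoints for which this bad event would have to occur simultaneously.

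The main obstacle is exactly that in the $k$-out model a vertex contributes only $O(1)$ fresh edges, so, in contrast to $G_{n,p}$, one round of rotations grows the endpoint set only by a constant factor and cannot in a single step reach an endpoint set of size $\Omega(m)$ against which P\'osa's bound $|N(S)|<3|S|$ could be played off $\d(G)\ge m$. Making the argument go through calls for iterating ``expose, then extend or rotate'' over a bounded number of sub-rounds while carefully tracking the conditioning, so that edges exposed in later sub-rounds are still close to uniform given the history, and for exploiting the $\tfrac{\e m}{2}$ of slack so that only a well-placed endpoint set of \emph{bounded} size, not a linear one, is needed to close the cycle. Controlling this conditioning, and converting ``the fresh neighbourhood of an endpoint is confined to a short interval'' into a quantitatively small probability, is the technical core; the rest is routine rotation--extension bookkeeping on top of the path delivered by Theorem~\ref{th3}.
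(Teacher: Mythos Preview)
Your plan is natural but differs substantially from the paper's route, and the obstacle you flag is more serious than your sketch acknowledges. In the rotation--extension framework you need, at the end, an endpoint $w$ whose fresh randomness hits a \emph{closing} target on $Q$; but the trichotomy you state (``extend, close, or all $k_2$ choices avoid an $\Omega(\e m)$ set'') only guarantees that with probability bounded away from $0$ you \emph{extend or close}, not that you close. Nothing rules out that every endpoint you ever reach has all its $G$-neighbours either off $Q$ (forcing extension) or within $(1-\e)m$ of itself on $Q$ (useless), so the process may consume second-round budgets of vertex after vertex while the path grows far past $m$ without ever throwing a closing chord. Splitting $k_2$ into a bounded number of sub-rounds only builds an endpoint set of bounded size, and a bounded set of endpoints each with $O(1)$ fresh edges gives a closing probability bounded away from $0$ but not tending to $1$; the ``$\e m/2$ of slack'' does not by itself pin any endpoint near a closing target. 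You have correctly located the difficulty but not dissolved it.

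The paper abandons path rotations entirely. It uses four colour classes and runs a modified DFS in \emph{epochs} indexed by binary strings: each epoch grows the exploration tree by a path of length roughly $m$, and two child epochs branch from the tail of each successful one, using a fresh colour so that the newly reached vertices still have unrevealed choices. As the tree grows, every discovered vertex is classified by how many of its $G$-neighbours already lie in the tree and at what tree-distance. A branching-process comparison shows that w.h.p.\ some epoch accumulates $\e m$ vertices with many $G$-neighbours at tree-distance $\ge(1-O(\e))m$, whose fresh $k$ choices then close a long cycle directly; or else $\e m$ vertices whose many $G$-neighbours in the tree are all nearby, in which case a pigeonhole and double-chord argument across two sibling epoch-paths manufactures the cycle. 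The essential difference from your plan is that the \emph{tree}, with its multiple long branches sharing a common spine, is what forces newly discovered vertices eventually to see $G$-neighbours far away inside the explored set --- a conclusion a single path with $O(1)$-fan-out rotations cannot deliver.
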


We finally note that in a recent paper, Frieze, Goyal, Rademacher and Vempala \cite{FGRV} have shown that $G_k$ is useful in the construction of sparse subgraphs with expansion properties that mirror those of the host graph $G$. 
\section{Connectivity: Proof of Theorem \ref{th1}}\label{conn}
In this section we will assume that each vertex makes its choices with
replacement. Let $G = (V, E)$ be an SDG. Let $c = 1/(8e)$. We need the following lemma.

\begin{lem}\label{lem1} 
Let $G$ be an SDG and let $C=12/\e$. Then w.h.p. there exists a set $L \subseteq V$, where $|L| \leq C\log n$, such that each pair of vertices $u,v\in V \setminus L$ have at least $12\log n$ common neighbors in $L$.
\end{lem}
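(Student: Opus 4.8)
The plan is a straightforward application of the probabilistic method: I would choose $L$ at random and show it has the stated property with probability $1-o(1)$, which in particular proves existence. The starting observation is purely deterministic. Since $\d(G)\ge\brac{\frac12+\e}n$, inclusion--exclusion gives, for every pair $u,v\in V$,
\[ |N_G(u)\cap N_G(v)| \ge d_G(u) + d_G(v) - |N_G(u)\cup N_G(v)| \ge 2\brac{\tfrac12+\e}n - n = 2\e n . \]
So each of the $\binom n2$ pairs of vertices has a common neighbourhood of size at least $2\e n$, and what the lemma asks for is a set $L$ with $|L|\le C\log n = \frac{12}{\e}\log n$ that meets every one of these sets in at least $12\log n$ vertices.

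I would let $L$ contain each vertex of $V$ independently with probability $p = \frac{11}{\e}\cdot\frac{\log n}{n}$, a constant factor $11/12$ below the target size. Then $\E|L| = \frac{11}{\e}\log n$, so a Chernoff bound gives $\Prob{|L| > \frac{12}{\e}\log n} = \Prob{|L| > \frac{12}{11}\E|L|} = o(1)$. For a fixed pair $u,v$, write $X_{uv} = |N_G(u)\cap N_G(v)\cap L|$; since $|N_G(u)\cap N_G(v)|\ge 2\e n$, the variable $X_{uv}$ stochastically dominates $\mathrm{Bin}(2\e n,p)$, whose mean is $2\e n p = 22\log n$. Hence, with $\delta = 5/11$,
\[ \Prob{X_{uv} < 12\log n} \le \Prob{\mathrm{Bin}(2\e n,p) < (1-\delta)\cdot 22\log n} \le e^{-\delta^2\cdot 22\log n/2} = n^{-25/11} . \]
Since $25/11 > 2$, a union bound over the fewer than $n^2$ pairs shows that with probability $1-o(1)$ every pair of vertices of $V$ --- a fortiori every pair in $V\setminus L$ --- has at least $12\log n$ common neighbours in $L$. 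Intersecting with the event $\{|L|\le \frac{12}{\e}\log n\}$ still leaves probability $1-o(1)$, which establishes the lemma. (One could equally take $L$ uniformly at random among subsets of $V$ of size $\rdup{\frac{11}{\e}\log n}$ and use the fact that the hypergeometric distribution is at least as concentrated as the corresponding binomial; then the bound $|L|\le C\log n$ is automatic and only the union bound is needed.)

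The only genuinely delicate point is the bookkeeping of constants: the union bound over $\Theta(n^2)$ pairs forces $\E X_{uv}$ to exceed $12\log n$ by a fixed constant factor, which pushes the size of $L$ up, while the conclusion simultaneously caps $|L|$ at $\frac{12}{\e}\log n$; one has to check these two demands can be met at once, and the choice $p \approx \frac{11}{\e}\cdot\frac{\log n}{n}$ above does so with a little room to spare. Everything else is a routine concentration computation, and it is worth noting that $G_k$ plays no role here --- Lemma~\ref{lem1} is a statement about the host graph $G$ alone, to be used later as deterministic input to the proof of Theorem~\ref{th1}.
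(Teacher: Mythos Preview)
Your proposal is correct and follows essentially the same route as the paper: sample $L$ by including each vertex independently with probability $\Theta(\log n/n)$, use the $2\e n$ common-neighbour lower bound together with a Chernoff/union bound over all pairs, and conclude by the probabilistic method. The only cosmetic difference is that the paper takes $p=\frac{12}{\e}\cdot\frac{\log n}{n}$ and appeals to the expectation $\E|L|=C\log n$ to deduce existence of a suitable $L$ of that size, whereas you take the slightly smaller $p=\frac{11}{\e}\cdot\frac{\log n}{n}$ and control $|L|$ directly by a second Chernoff bound; your bookkeeping is in fact a little more careful.
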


\begin{proof} Define $L_p \subseteq V$ by including each $v \in V$ in $L_p$ with probability $p = C\log n/n$. Since $\delta(G) \geq (1/2+\varepsilon)n$, each pair of vertices in $G$ has at least $2\varepsilon n$ common neighbors in $G$. Hence, the number of common neighbors in $L_p$ for any pair of vertices in $V \setminus L_p$ is bounded from below by a $\mbox{Bin}(2\e n, p)$ random variable. 
\begin{eqnarray*}
&&\Probtext{$\exists u,v \in V \setminus L_p$ with less than $12\log n$ common neighbors in $L$} \\
&\leq&n^2 \Prob{\mbox{Bin}(2\varepsilon n, p) \leq 12\log n} \\
&=& n^2 \Prob{\mbox{Bin}(2\varepsilon n, p) \leq \e np}  \\
&\leq& n^2e^{-\e np/8} \\
&=& o(1).
\end{eqnarray*}
Since the expected size of $L_p$ is $C\log n$, there exists a set $L$, $|L| \leq C\log n$, with the desired property.
\end{proof}

Let $L$ be a set as provided by the previous lemma, and let $G_k'$ denote the subgraph of $G_k$ induced by $V \setminus L$.

\begin{lem}\label{lem2} 
Let $c=1/(8e)$. Then w.h.p. all components of $G_k'$ are of size at least $cn$. Furthermore, removing any set of $k-1$ vertices from $G_k'$ produces a graph consisting entirely of components of size at least $cn$, and isolated vertices.
\label{lem:giant} \end{lem}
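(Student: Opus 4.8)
The plan is to prove both statements with a single first-moment argument over "bad" vertex sets. Call a set $S \subseteq V \setminus L$ with $|S| < cn$ \emph{isolated in $G_k' - T$} for some $T$ with $|T| \le k-1$ if $|S| \ge 2$ and no vertex of $S$ chooses a neighbor outside $S \cup T$ in $G_k$ (equivalently, $S$ together with isolated vertices forms a union of components of $G_k' - T$). It suffices to show that w.h.p.\ no such pair $(S,T)$ exists with $2 \le |S| < cn$; taking $T = \emptyset$ gives the first sentence of the lemma, and allowing $|T| \le k-1$ gives the second. So I would fix $T$ with $|T| = t \le k-1$ and $S$ with $|S| = s$, $2 \le s < cn$, and bound the probability that $S$ is isolated in $G_k' - T$. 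Every vertex $v \in S$ must have all $k$ of its choices land inside $S \cup T \setminus \{v\}$ — but here we must be careful, since $v$ only chooses among its $G$-neighbors, and $v$ may have few neighbors in $S \cup T$. This is exactly where $\delta(G) \ge (\tfrac12 + \e)n$ enters.

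First I would set up the per-vertex bound. Fix $v \in S$. Since $d_G(v) \ge (\tfrac12+\e)n$ and $|S \cup T| \le cn + k \le (\tfrac12 - \e)n$ for $n$ large (recall $c = 1/(8e) < 1/2 - \e$ is false in general — wait, $1/(8e) \approx 0.046$, so indeed $cn + k < (\tfrac12 - \e)n$ for small $\e$, and for larger $\e$ the bound is only easier, so assume $\e$ small WLOG), $v$ has at least $(\tfrac12 + \e)n - |S \cup T| \ge \tfrac12 n$ neighbors outside $S \cup T$, hence at least that many of its $d_G(v) \le n$ choices-universe lies outside. So the probability that a single choice of $v$ lands inside $S \cup T$ is at most $\frac{|S \cup T|}{d_G(v)} \le \frac{s + k}{(\tfrac12+\e)n} \le \frac{2(s+k)}{n}$, and since $v$ makes $k$ independent choices with replacement, $\Prob{v \text{ has all choices in } S\cup T} \le \left(\frac{2(s+k)}{n}\right)^{k}$. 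Multiplying over the $s$ vertices of $S$ and union-bounding over the $\binom{n}{s}\binom{n}{t} \le \binom{n}{s} n^{k-1}$ choices of $(S,T)$:
\[
\Prob{\exists (S,T),\ |S|=s} \;\le\; n^{k-1}\binom{n}{s}\left(\frac{2(s+k)}{n}\right)^{ks} \;\le\; n^{k-1}\left(\frac{en}{s}\right)^{s}\left(\frac{2(s+k)}{n}\right)^{ks}.
\]

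Next I would show this sums to $o(1)$ over $2 \le s < cn$, splitting into small and large $s$. For $s$ bounded (say $s = O(1)$), the factor $(2(s+k)/n)^{ks}$ is at most $n^{-ks}(2(s+k))^{ks}$, which kills the $n^{k-1}(en/s)^s \le n^{k-1+s}$ growth as soon as $ks > k-1+s$, i.e.\ $s \ge 2$ and $k \ge 2$ — giving a bound like $n^{-1+o(1)}$ per value of $s$. For growing $s$ up to $cn$, write $s = \alpha n$ with $\alpha \le c$; the bound becomes (ignoring the harmless $n^{k-1}$ and $k$ additive terms, which are lower order) roughly $\left[\left(\frac{e}{\alpha}\right)(2\alpha)^{k}\right]^{\alpha n} = \left[e \cdot 2^k \alpha^{k-1}\right]^{\alpha n}$, and since $\alpha \le c = 1/(8e)$ we have $e \cdot 2^k \alpha^{k-1} \le e \cdot 2^k (8e)^{-(k-1)} = \frac{e \cdot 2^k \cdot 8e}{(8e)^k} = \frac{8e^2}{(4e)^k} < 1$ for $k \ge 2$, so each term is exponentially small; one then bounds the sum by a geometric-type series. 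Summing the two regimes gives $o(1)$, completing the proof. I would handle the $n^{k-1}$ prefactor in the large-$s$ regime by noting $n^{k-1} = e^{(k-1)\log n} = e^{o(\alpha n)}$ when $s = \alpha n$, absorbable into the exponential decay.

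**Main obstacle.** The calculations are routine; the one genuine subtlety — and the reason the constant $c = 1/(8e)$ is chosen as it is — is making the large-$s$ estimate $e \cdot 2^k \alpha^{k-1} < 1$ hold uniformly for all $k \ge 2$ simultaneously (it is tightest at $k=2$, where it reads $2e\alpha < 1$, comfortably satisfied by $\alpha \le 1/(8e)$). A secondary point to get right is the boundary case $s=2$ in the small-$s$ regime, where the exponent balance $ks > k - 1 + s$ is tightest; it holds with room to spare for $k \ge 2$. One should also double-check that "isolated vertices" are correctly excluded from the count (only $|S| \ge 2$ components are forbidden), which is why the union bound starts at $s = 2$ and why the statement of the lemma explicitly permits isolated vertices after removing $T$.
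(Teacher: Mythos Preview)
Your approach is the same as the paper's --- a first-moment union bound over pairs $(S,T)$ --- but you have a genuine gap: you forgot about $L$. Recall that $G_k'$ is the subgraph of $G_k$ \emph{induced on} $V\setminus L$. So if a vertex $v\in S$ sends one of its $k$ choices to a vertex of $L$, that edge simply does not appear in $G_k'$, and $S$ can still be a component of $G_k'-T$. Consequently the event you bound, ``every choice from $S$ lands in $S\cup T$'', is strictly contained in the bad event ``$S$ is a union of components of $G_k'-T$'', which only requires that every choice from $S$ land in $S\cup T\cup L$. Your union bound therefore misses bad configurations rather than covering them.

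This is not a cosmetic slip: it is exactly why the paper treats isolated vertices separately. A vertex $v\in V\setminus L$ is isolated in $G_k'$ precisely when all $k$ of its choices land in $L$; under your formulation (with $T=\emptyset$ and $s=1$) this would require $v$ to choose inside $\{v\}$, which you would dismiss as impossible --- so your argument does not establish the first sentence of the lemma at all. The fix is to replace $S\cup T$ by $S\cup T\cup L$ throughout. The per-vertex bound becomes $\bigl(\tfrac{s+k-1+C\log n}{(\frac12+\e)n}\bigr)^k$, and the small-$s$ regime must now absorb the extra $C\log n$ in the numerator (the paper handles this by splitting the sum at $\log^2 n$ rather than at $O(1)$). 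One must also check $s=1$ with $T=\emptyset$ separately, giving the $n\cdot\bigl(\tfrac{C\log n}{n/2}\bigr)^k=o(1)$ bound. With these corrections your outline goes through and matches the paper's proof.
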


\begin{proof} 
We first show that w.h.p. $G_k'$ contains no isolated vertex. The probability of $G_k'$ containing an isolated vertex is bounded by
$$\Probtext{$\exists v \in V \setminus L$ which chooses neighbors in $L$ only}\leq n \left[\frac{C\log n}{\frac{1}{2}n} \right]^k=o(1),$$
where $L$ and $C$ are as in Lemma \ref{lem1}.

We now consider the existence of small non-trivial components $S$ after the removal of at most $k-1$ vertices $A$. Then,
\begin{align*}
&\Probtext{$\exists S, A$, $2\leq |S|\leq cn$, $|A| = k-1$, such that $S$ only chooses neighbors in $S\cup L \cup A$} \\
&\leq \sum_{l=2}^{cn} \sum_{|S| = l} \sum_{|A| = k-1}  \left[\frac{l + k-2+C\log n}{\left(\frac{1}{2} +\varepsilon\right)n}\right]^{lk} \\
&\leq \sum_{l=2}^{cn} \binom{n}{l} \binom{n-l}{k-1} \left[\frac{l + C\log n}{\frac{1}{2}n}\right]^{lk} \\
&\leq \sum_{l=2}^{cn} \left(\frac{ne}{l}\right)^ln^{k-1}\left[\frac{l + C\log n}{\frac{1}{2}n}\right]^{lk} \\
&= 2^ke\sum_{l=2}^{cn} \left[\frac{2^k e(l+C\log n)^k}{n^{k-1}l}\right]^{l-1} \frac{(l+C\log n)^k}{l}\\
&\leq 2^ke\brac{\sum_{l=2}^{\log^2n}\bfrac{\log^{3k}n}{n^{k-1}}^{l-1} +\sum_{l=\log^2n}^{cn}(4ec^{k-1})^{l-1}n^k}\\
&=o(1).
\end{align*}
\end{proof}

This proves that w.h.p. $G_k'$ consists of $r \leq 1/c$ components $J_1, J_2, ..., J_r$ and that removing any $k-1$ vertices will only leave isolated vertices and components of size at least $cn$.

\begin{lem}
W.h.p., for any $i \neq j$, there exist $k$ node-disjoint paths (of length 2) from $J_i$ to $J_j$ in $G_k$.
\label{lem:connect}
\end{lem}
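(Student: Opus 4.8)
The plan is to use the structure established so far: we have components $J_1,\dots,J_r$ of $G_k'$, each of size at least $cn$, living inside $V\setminus L$, and the "helper" set $L$ of size at most $C\log n$ in which every pair of vertices of $V\setminus L$ has at least $12\log n$ common neighbors. To produce $k$ node-disjoint length-2 paths from $J_i$ to $J_j$ I would reveal one more round of randomness: for a suitable subset of vertices of $J_i$, look at their $k$ chosen neighbors and try to route through $L$ into $J_j$. The key point is that although $L$ is small, any single pair $(u\in J_i, v\in J_j)$ already has $\ge 12\log n$ common neighbors in $L$, which gives plenty of room to find $k$ internally disjoint two-edge connections even after we forbid previously used vertices of $L$.

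The steps, in order, would be: (1) Fix $i\ne j$ and pick $k$ distinct vertices $u_1,\dots,u_k\in J_i$ and $k$ distinct vertices $v_1,\dots,v_k\in J_j$ (possible since $|J_i|,|J_j|\ge cn\gg k$). (2) Process the pairs $(u_t,v_t)$ one at a time for $t=1,\dots,k$; when processing pair $t$, let $L_t\subseteq L$ be the common neighbors of $u_t$ and $v_t$ in $L$ not yet used by paths $1,\dots,t-1$, so $|L_t|\ge 12\log n-(t-1)\ge 11\log n$. (3) We need some $w\in L_t$ such that $u_t$ chose $w$ (among its $k$ choices in $G_k$) and $v_t$ chose $w$; equivalently, the edges $u_tw$ and $v_tw$ both appear in $G_k$. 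Since $w$ is a neighbour of $u_t$ in $G$ and $d_G(u_t)\le n$, the probability $u_t$ picks $w$ is at least $k/n$... this is too small per vertex, so instead I would pass the randomness the other way: each $w\in L_t$ independently picks $k$ of its $G$-neighbors, and $w$ is a valid midpoint for path $t$ exactly if $w$ picks both $u_t$ and $v_t$. Wait — that is also only $\Theta(k^2/n^2)$ per $w$, times $11\log n$ choices of $w$, which is $o(1)$; so length-2 paths routed purely through the tiny set $L$ cannot be guaranteed this way.

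So the correct approach must not rely on $L$. Instead I would forget $L$ and work inside $G_k$ directly: for fixed $i\ne j$, consider the $\ge cn$ vertices of $J_i$ and the $\ge cn$ vertices of $J_j$, and among the unrevealed choices reveal, for each vertex $x\in J_i$, whether it chooses a vertex in $J_j$ (a ``direct'' edge) — by Lemma \ref{lem2}-type reasoning such direct edges are unlikely, so we genuinely want length-2 paths with midpoint \emph{anywhere} in $V$. The plan: show that for a uniformly random vertex $w\in V$, with constant probability $w$ has a $G$-neighbor in $J_i$ and a $G$-neighbor in $J_j$ (this uses $\d(G)\ge(1/2+\e)n$ and $|J_i|,|J_j|\ge cn$: a vertex fails to have a neighbor in $J_i$ only if its $\ge(1/2+\e)n$ neighbors miss a $cn$-set, which can happen, so one must be a bit more careful and count, over all $w$, the number of $(w,a,b)$ with $a\in J_i\cap N(w)$, $b\in J_j\cap N(w)$, getting $\Omega(n)$ such $w$). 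For each such $w$, conditioned on its choices, the probability $w$ selects at least one vertex of $J_i$ and at least one of $J_j$ among its $k$ picks is bounded below by a constant $p_0(k,c,\e)>0$. Then the number of $w\in V$ that actually provide a length-2 $J_i$–$J_j$ path stochastically dominates a binomial with mean $\Omega(n)$, so whp there are $\Omega(n)\ge k$ of them; different such $w$ have distinct midpoints, but we must also make the \emph{endpoints} in $J_i$ and the endpoints in $J_j$ distinct across the $k$ paths. To get that, I would reveal the midpoint candidates greedily $k$ times, each time discarding the $\le 2$ endpoints just used from $J_i$ and $J_j$; since $|J_i|,|J_j|\ge cn$ the conditional density of good midpoints barely changes, so a Chernoff/union bound over the $\binom r2 \le 1/(2c^2)$ pairs $(i,j)$ closes the argument.

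The main obstacle I anticipate is precisely this last bookkeeping: ensuring the $k$ paths between a given $J_i,J_j$ are fully node-disjoint (distinct midpoints \emph{and} distinct endpoints on each side) while the underlying random choices of $G_k$ have already been partially exposed by the proofs of Lemmas \ref{lem1} and \ref{lem2}. I would handle the conditioning by noting that those earlier lemmas only used the choices in a monotone ``bad-event'' way, so we may condition on their conclusions and still treat the remaining structure as essentially fresh randomness; and I would handle disjointness by the greedy peeling above, which works because $k=O(1)$ while each $J_i$ has linear size, so removing $O(k)$ vertices never drops the count of available good midpoints below $k$. A union bound over the constantly many pairs $(i,j)$ then gives the ``w.h.p.'' conclusion.
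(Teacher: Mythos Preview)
Your abandonment of the $L$-based route is the error, and your replacement route cannot work. When you fix specific endpoints $u_t,v_t$ and ask a single $w\in L$ to choose both of them you do indeed get probability $\Theta(k^2/n^2)$, but that is not what is needed: it suffices that $w$ choose \emph{some} vertex of $J_i$ and \emph{some} vertex of $J_j$. Summing $\Pr[w\text{ picks }u\text{ and }v]\ge 1/n^2$ over all triples $(u,v,w)$ with $u\in J_i$, $v\in J_j$ and $w\in L\cap N_G(u)\cap N_G(v)$ --- of which there are at least $|J_i|\,|J_j|\cdot 12\log n\ge 12c^2n^2\log n$ by Lemma~\ref{lem1} --- already gives an expected count of $\Omega(\log n)$ good midpoints $w\in L$. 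This is exactly the computation the paper performs; concentration then follows from Hoeffding over the $|L|\le C\log n$ independent bounded summands $X_w$, and full node-disjointness of the resulting length-$2$ paths follows from a birthday bound (two vertices of $L$ share a chosen neighbour with probability $O(k^2\log^2 n/n)=o(1)$). The point of introducing $L$ in the first place was that the $k$-out choices of $L$-vertices are \emph{not used} in Lemma~\ref{lem2}, so they remain fresh here.

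Your fallback --- taking midpoints $w$ anywhere in $V$ --- fails for a structural, not merely a conditioning, reason. The components $J_1,\dots,J_r$ are \emph{defined} by the edges of $G_k'=G_k[V\setminus L]$: if $w\in J_\ell$ then every $G_k$-neighbour of $w$ lying in $V\setminus L$ is also in $J_\ell$, so $w$ cannot be the middle vertex of any $J_i$--$J_j$ path with $i\neq j$. Thus every length-$2$ path between distinct components must have its midpoint in $L$, and the only unrevealed randomness available is precisely the $k$-out choices of the $L$-vertices. Your ``monotone bad-event'' justification does not rescue this: once you name $J_i$ and $J_j$ you are conditioning on the entire partition $\{J_\ell\}$, which determines the edge set of $G_k'$ and hence forces the conditional probability that a $w\in V\setminus L$ chooses into two different components to be exactly zero, not a positive constant $p_0$.
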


\begin{proof}
Let $X$ be the number of vertices in $L$ which pick at least one neighbor in $J_1$ and at least one in $J_2$. 
Furthermore, let $X_{uvw}$ be the indicator variable for $w\in L$ picking $u\in J_1$ and $v\in J_2$ as its neighbors. Note that these variables are independent of $G_k'$.
Let $c=1/(8e)$ as in Lemma \ref{lem2} and let $C=12/\e$ as in Lemma \ref{lem1}. For $w\in L$ we let
$$X_w=\sum_{\substack{(u,v)\in J_1\times J_2\\w\in N_G(J_1)\cap N_G(J_2)}}X_{uvw}.$$
These are independent random variables with values in $\set{0,1,\ldots,k}$.
Let $X=\sum_{w\in L}X_w$. Then,
\begin{eqnarray*}
\E{X} &=& \sum_{u\in J_1} \sum_{v\in J_2} \sum_{\substack{w\in L\\w\in N(J_1)\cap N(J_2)}} \E{X_{uvw}} \\
&=& \sum_{u\in J_1} \sum_{v\in J_2} \sum_{\substack{w\in L\\w\in N(J_1)\cap N(J_2)}} \brac{1-\brac{1-\frac{1}{d_G(u)}}^k}\brac{1-\brac{1-\frac{1}{d_G(v)}}^k}\\
&\geq& \sum_{u\in J_1} \sum_{v\in J_2} \sum_{\substack{w\in L\\w\in N(J_1)\cap N(J_2)}} \frac{1}{n^2}\\
&\geq& \frac{(cn)^2 12\log n}{n^2} \\
&=& 12c^2  \log n.
\end{eqnarray*}
Applying Hoeffdings inequality we get
\begin{equation}\label{EQ1}
\Prob{X\leq k} \leq \Prob{X\leq \frac{\E{X}}{2}} 
\leq \exp \left\{-\frac{(\E{X})^2}{2k^2|L|}\right\} 
=o(1).
\end{equation}
Now for $w_1\neq w_2\in L$ let $\cE(w_1,w_2)$ be the event that $w_1,w_2$ make a common choice. Then
\begin{equation}\label{EQ2}
\Prob{\exists w_1,w_2:\cE(w_1,w_2)}=O\bfrac{k^2\log^2n}{n}=o(1).
\end{equation}

Equations \eqref{EQ1} and \eqref{EQ2} together show that w.h.p., there are $k$ node-disjoint paths from $J_1$ to $J_2$. Since the number of giant components is bounded by a constant, this is true for all pairs $J_i, J_j$ w.h.p.
\end{proof}

We can complete the proof of Theorem \ref{th1}.
Suppose we remove $l$ vertices from $L$ and $k-1-l$ vertices from the remainder of $G$. We know from Lemma \ref{lem1} that $V\setminus L$ induces components $C_1,C_2,\ldots,C_r$ of size at least $cn$. There cannot be any isolated verticesin $V\setminus L$ as $G_k$ has minimum degree at least $k$. Lemma \ref{lem2} implies that $r=1$ and that every vertex in $L$ is adjacent to $C_1$.
\proofend

\section{Hamilton cycles: Proof of Theorem \ref{th2}}
In this section we will assume that each vertex makes its choices
without replacement.
Let $G$ be a graph with $\delta(G) \geq (1/2+\varepsilon)n$, and let
$k$ be a positive integer. 

Let $\cD(k,n)=D_1, D_2,...,D_M$ be the $M = \prod_{v\in V}\binom{d_G(v)}{k}
\leq \binom{n-1}{k}^n$ directed graphs obtained by letting each vertex
$x$ of $G$ choose $k$ $G$-neighbors $y_1,...,y_k$, and including in
$D_i$ the $k$ arcs $(x,y_i)$. Define $\vec{N}_i(x) = \{y_1,...,y_k\}$
and for $S \subseteq V$ let $\vec{N}_i(S) = \bigcup_{x\in S}
\vec{N}_i(x) \setminus S$. For a digraph $D$ we let $G(D)$ denote the
graph obtained from $D$ by ignoring orientation and coalescing
multiple edges, if necessary. We let $\G_i=G(D_i)$ for $i=1,2,\ldots,M$. Let $\cG(k,n) =
\{\G_1,\G_2,...,\G_M\}$ be the set of $k$-out graphs on $G$.
Below, when we say that $D_i$ is Hamiltonian we actually mean that
$\G_i$ is Hamiltonian. (It will occasionally enable more succint statements).

For each $D_i$, let $D_{i1}, D_{i2},...,D_{i\k}$ be the $\k =k^n$
different edge-colorings of $D_i$ in which each vertex has $k-1$
outgoing green edges and one outgoing blue edge. Define $\G_{ij}$ to be
the colored (multi)graph obtained by ignoring the orientation of edges in
$D_{ij}$. Let $\G^g_{ij}$ be the subgraph induced by green edges.

$\vec{N}(S)$ refers to
$\vec{N}_i(S)$ when $i$ is chosen uniformly from $[M]$, as it will be
for $G_k$. 
\begin{lem}
Let $k \geq 5$. There exists an $\alpha > 0$ such that the following holds w.h.p.: for any set $S \subseteq V$ of size $|S| \leq \alpha n$, $|\vec{N}(S)| \geq 3|S|$.
\label{lem:expand}
\end{lem}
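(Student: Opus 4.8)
The plan is a first‑moment (union bound) argument over all candidate ``bad'' configurations. If the conclusion fails for some $S$ with $|S|=s$, then the set $T:=\vec N(S)$ has size at most $3s-1$, and \emph{every} vertex of $S$ selects all $k$ of its out‑neighbors inside $W:=S\cup T$, a set of size at most $4s$. So I would fix a set $S$ of size $s$ and a set $T$ of size exactly $3s$ disjoint from $S$ (possible once $\alpha\le 1/4$), bound the probability $p(S,T)$ that all vertices of $S$ choose only in $W=S\cup T$, and then take a union bound over $S$, $T$ and over $s$ in the range $1\le s\le\alpha n$.

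For the single‑vertex bound: since a fixed $x\in S$ picks $k$ of its $d_G(x)\ge(\tfrac12+\e)n$ neighbors without replacement, the chance that all $k$ lie in $W$ is $\binom{|N_G(x)\cap W|}{k}\big/\binom{d_G(x)}{k}\le\big(|W|/d_G(x)\big)^k\le(8s/n)^k$, using $|W|\le 4s$ and $\tfrac1{1/2+\e}<2$. Distinct vertices choose independently, so $p(S,T)\le(8s/n)^{ks}$. Since there are at most $\binom{n}{s}\binom{n}{3s}\le(en/s)^{4s}$ pairs $(S,T)$ of the relevant sizes, the probability that the lemma fails is at most
\[
\sum_{s=1}^{\alpha n}\Big(\frac{en}{s}\Big)^{4s}\Big(\frac{8s}{n}\Big)^{ks}
=\sum_{s=1}^{\alpha n}\Big[e^4 8^k\Big(\frac{s}{n}\Big)^{k-4}\Big]^{s}.
\]
This is precisely where $k\ge5$ enters: the exponent $k-4$ is at least $1$, so the bracketed term is controlled by $s/n$.

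To finish, I would split the sum at $s=\log n$. For $1\le s\le\log n$ the bracket is at most $e^4 8^k(\log n/n)^{k-4}=o(1)$, and since each summand is at most its first power, this part contributes $O(\log^2 n/n)=o(1)$. For $\log n<s\le\alpha n$ the bracket is at most $e^4 8^k\alpha^{k-4}$, which is $\le\tfrac12$ once $\alpha=\alpha(k)$ is chosen small enough; then that part is at most $\sum_{s>\log n}2^{-s}=o(1)$. Together these give failure probability $o(1)$, which proves the lemma. The computation is routine; the only genuine points of care are (i) handling very small $s$ via the $\log^2n/n$ estimate rather than geometric decay, and (ii) noting that for $k=4$ the bracket becomes a constant larger than $1$, so the hypothesis $k\ge5$ is really needed to get $3|S|$‑expansion.
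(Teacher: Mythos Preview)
Your argument is correct and follows essentially the same route as the paper: a first-moment union bound over pairs $(S,T)$ with $|S|=s$ and $|T|\approx 3s$, bounding the probability that every vertex of $S$ chooses all its $k$ neighbors inside $S\cup T$ by $(O(s/n))^{ks}$, and summing to obtain $\sum_s [C_k (s/n)^{k-4}]^s=o(1)$. The only cosmetic differences are that the paper takes $|T|=3s-1$ rather than $3s$, uses the cruder estimate $\binom{4s}{k}/\binom{d_G(v)}{k}\le (8es/n)^k$ (yielding $(8e)^k$ in place of your $8^k$), and does not explicitly split the sum at $s=\log n$; it simply records the concrete value $\alpha=2^{-16}e^{-9}$.
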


\begin{proof}
The claim fails if there exists an $S$ with $|S| \leq \alpha n$ such that there exists a $T$, $|T| = 3|S|-1$ such that $\vec{N}(S) \subseteq T$. The probability of this is bounded from above by
\begin{eqnarray*}
&& \sum_{l=1}^{\alpha n} \binom{n}{l}\binom{n-l}{3l-1} \prod_{v\in S} \left[\binom{4l-2}{k} \middle /\binom{d_G(v)}{k}\right] \\
&\leq& \sum_{l=1}^{\alpha n} \left(\frac{ne}{l}\right)^l \left(\frac{ne}{3l-1}\right)^{3l-1} \left[\frac{4le}{n/2}\right]^{kl} \\
&\leq& \sum_{l=1}^{\alpha n}\left[e^4 (8e)^k
  \left(\frac{l}{n}\right)^{k-4}\right]^l\\
&=&o(1),
\end{eqnarray*}
for $\alpha = 2^{-16}e^{-9}$.
\end{proof}

We say that a digraph $D_i$ \textit{expands} if $|\vec{N}_i(S)|\geq 3|S|$
whenever $|S| \leq \alpha n$, $\alpha = 2^{-16}e^{-9}$. Since almost
all $D_i$ expand, we need only prove that an expanding $D_i$ almost
always gives rise to a Hamiltonian $\G_i$. Write $\mathcal{D}'(k,n)$
for the set of expanding digraphs in $\mathcal{D}(k,n)$
and let $\cG'(k,n)=\set{\G_i:D_i\in \cD'(k,n)}$.

Let $H$ be any graph, and suppose $P = (v_1,...,v_k)$ is a longest
path in $H$. If $t \neq 1, k-1$ and $\{v_k, v_t\} \in E(H)$, then
$P'=(v_1,...,v_t, v_k, v_{k-1},...,v_{t+1})$ is also a longest path of
$H$. Repeating this rotation for $P$ and all paths created in the
process, keeping the endpoint $v_1$ fixed, we obtain a set $EP(v_1)$
of other endpoints. 

For $S\subseteq V(H)$ we let $N_H(S)=\set{w\notin S:\exists v\in S\
  s.t.\ vw\in E(H)}$.
\begin{lem} [P\'osa]
For any endpoint $x$ of any longest path in any graph $H$, $|N_H(EP(x))| \leq 2 |EP(x)| - 1$.
\label{lem:posa}
\end{lem}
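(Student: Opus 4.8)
The statement to prove is the classical Pósa rotation lemma: if $x$ is an endpoint of a longest path in a graph $H$, and $EP(x)$ is the set of endpoints obtainable by iterated rotations keeping $x$ fixed, then $|N_H(EP(x))| \le 2|EP(x)| - 1$. I would carry this out by the standard case analysis on where a neighbor of an endpoint in $EP(x)$ can lie. The first step is to fix the reservoir of longest paths: let $\cP$ be the set of all longest paths obtained from the initial longest path $P$ by iterated rotations with $x$ held fixed, and note every path in $\cP$ has the same vertex set, call it $U$, since rotation never changes the vertex set of a path. Indeed $|EP(x)|$ is the set of second endpoints appearing over $\cP$.

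The core claim is: if $y \in EP(x)$ and $w \in N_H(y)$ with $w \notin EP(x)$, then $w \in U$ and moreover $w$ is an \emph{internal} vertex of every path in $\cP$, and in fact $w$ has a neighbor on the path that also lies in $EP(x)$. More precisely, fix a longest path $Q \in \cP$ with endpoints $x$ and $y$; write $Q = (x = u_1, u_2, \ldots, u_\ell = y)$. First, $w \in U$: otherwise $Qw$ would be a longer path, contradicting maximality; also $w \neq x$ since $x$ has fixed degree constraints handled separately (if $w=x$ it is already excluded as $x \in EP(x)$ trivially or handled in the bookkeeping). So $w = u_t$ for some $2 \le t \le \ell-1$ (it is internal, else $w \in EP(x)$). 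Then rotating $Q$ at the edge $\{y, u_t\}$ yields the longest path $(u_1, \ldots, u_t, u_\ell, u_{\ell-1}, \ldots, u_{t+1})$, which lies in $\cP$ and has endpoint $u_{t+1}$; hence $u_{t+1} \in EP(x)$. Symmetrically, if the edge $\{y,u_t\}$ could be used in the reverse orientation we get $u_{t-1}\in EP(x)$ — but the clean statement is just that \emph{at least one} neighbor of $w$ on the path lies in $EP(x)$. The upshot: every vertex of $N_H(EP(x)) \setminus EP(x)$ is an internal vertex of $Q$ both of whose path-neighbors... — more carefully, both path-neighbors of such a $w$ must lie in $EP(x)$, because if $u_{t+1} \notin EP(x)$ then $u_{t+1} \in N_H(EP(x))$ (it is adjacent to $u_t = w$, but that does not put it in $EP(x)$) — instead one argues directly that both $u_{t-1}$ and $u_{t+1}$ lie in $EP(x) \cup (\text{something})$; the standard and correct conclusion is that every $w \in N_H(EP(x))\setminus EP(x)$ has \emph{both} its neighbors along $Q$ in $EP(x)$.

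Granting that structural fact, the counting is immediate: the sets $EP(x)$ and $N_H(EP(x)) \setminus EP(x)$ partition into $EP(x)$ together with a set of vertices each of which is ``sandwiched'' between two elements of $EP(x)$ along the fixed path $Q$. Reading along $Q$, each vertex of $N_H(EP(x)) \setminus EP(x)$ occupies a slot strictly between two consecutive-in-$Q$ elements of $EP(x)$, and no two such vertices occupy the same gap (a gap between consecutive elements of $EP(x)$ on $Q$, if it contained two or more non-$EP(x)$ vertices, the inner one would fail to be adjacent to an $EP(x)$ vertex on both sides). Since $EP(x)$ has at most $|EP(x)|-1$ internal gaps along $Q$ (the first element being $x$ itself at one end), we get $|N_H(EP(x)) \setminus EP(x)| \le |EP(x)| - 1$, hence $|N_H(EP(x))| \le |EP(x)| + (|EP(x)| - 1) = 2|EP(x)| - 1$, as claimed.

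The main obstacle — and the only part needing genuine care rather than bookkeeping — is establishing the structural claim that a neighbor $w$ of an endpoint $y \in EP(x)$ which is itself not in $EP(x)$ must have \emph{both} of its neighbors along the reference path $Q$ lying in $EP(x)$. The subtlety is that different endpoints $y \in EP(x)$ may be reached via different paths in $\cP$ with different vertex orderings, so one must argue that the ``gap'' structure is consistent. The clean way around this is to prove the stronger invariant up front: rotations keeping $x$ fixed preserve the vertex set $U$, and for the \emph{fixed} reference path $Q$ (any single longest path through $x$), every $y \in EP(x)$ is an endpoint of \emph{some} path in $\cP$, and then show that for such $y$, a neighbor $w \notin EP(x)$ forces (via a single rotation) one path-neighbor of $w$ into $EP(x)$, and via the same argument applied to the rotated path forces the other. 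This is exactly the content of Pósa's original argument, and I would present it as a short self-contained induction on the number of rotations, which makes the ``both neighbors in $EP(x)$'' conclusion transparent and avoids any appeal to a canonical ordering.
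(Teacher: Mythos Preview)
The paper does not supply a proof of this lemma; it is quoted as a classical result of P\'osa and used as a black box. So there is nothing in the paper to compare your plan against, and the question is simply whether your argument goes through.

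It does not, because your central structural claim is false. You assert that every $w\in N_H(EP(x))$ has \emph{both} of its neighbours along a fixed reference path lying in $EP(x)$, and your counting rests on this (``each vertex of $N_H(EP(x))\setminus EP(x)$ occupies a slot strictly between two consecutive-in-$Q$ elements of $EP(x)$'', giving at most $|EP(x)|-1$ such vertices). Take $H$ to be the path $P=v_1v_2\cdots v_9$ together with the single extra edge $\{v_9,v_4\}$. One rotation with $v_1$ fixed produces endpoint $v_5$, and no further new endpoints arise, so $EP(v_1)=\{v_5,v_9\}$. Then $N_H(EP(v_1))=\{v_4,v_6,v_8\}$, and \emph{none} of these three has both $P$-neighbours in $\{v_5,v_9\}$; your count would give $3\le 1$. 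Your attempted rescue (``via the same argument applied to the rotated path forces the other'') fails because after the first rotation the path-neighbours of $w$ have changed, so a second rotation need not hit the \emph{original} other $P$-neighbour of $w$. Your final arithmetic, which adds an unexplained $|EP(x)|$ to $|EP(x)|-1$, also suggests some confusion about whether $N_H(S)$ includes $S$; under the paper's definition it does not.

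What your one-rotation step genuinely delivers is the weaker, correct fact: each $w\in N_H(EP(x))\setminus\{x\}$ has \emph{at least one} $P$-neighbour in $EP(x)$. To make this hold relative to the fixed original path $P$ one proves, by exactly the induction on rotations you allude to, that any vertex $v_j\notin EP(x)\cup\{x\}$ with $v_{j-1},v_{j+1}\notin EP(x)$ retains precisely its $P$-neighbours on every path in $\cP$; then a rotation pivoting at such a $v_j$ would force $v_{j\pm1}\in EP(x)$, a contradiction. Writing $S=EP(x)$, this gives $N_H(S)\setminus\{x\}\subseteq S^-\cup S^+$ (the $P$-predecessors and $P$-successors of $S$), and since $v_\ell\in S$ has no successor, $|S^-\cup S^+|\le |S|+(|S|-1)=2|S|-1$. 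The case $w=x$ needs a separate word; in the paper's intended application $x\notin N_H(S)$, since an edge from $x$ to an endpoint would close a cycle and yield either a longer path or a Hamilton cycle.
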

We say that a graph expands if  $|N_H(S)|\geq 2|S|$ whenever $|S| \leq \alpha n$, assuming $|V(H)|=n$.
\begin{lem}
Consider a green subgraph $\G^g_{ij}$. W.h.p.,
there exists an $\alpha > 0$ such that for every longest path $P$ in $\G^g_{ij}$ and endpoint $x$ of $P$, $|EP(x)| > \alpha n$.
\label{lem:greenexpand}
\end{lem}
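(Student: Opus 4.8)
The plan is to combine P\'osa's lemma (Lemma~\ref{lem:posa}) with the expansion property we already have. Here is the structure. Suppose $P$ is a longest path in the green subgraph $\G^g_{ij}$ with endpoint $x$. By Lemma~\ref{lem:posa} we know $|N_{\G^g_{ij}}(EP(x))| \leq 2|EP(x)| - 1$. On the other hand, we would like to say that $EP(x)$ expands in $\G^g_{ij}$: if $|EP(x)| \leq \alpha n$ then the neighborhood has size at least $2|EP(x)|$, contradicting P\'osa. So the heart of the matter is to show that w.h.p. the green subgraph $\G^g_{ij}$ expands, i.e. $|N_{\G^g_{ij}}(S)| \geq 2|S|$ for all $S$ with $|S| \leq \alpha n$, for a suitable $\alpha$. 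Once that is in hand, no longest-path endpoint set can have size $\le \alpha n$, which is exactly the claim.

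First I would set up the expansion estimate directly for the green graph, imitating the proof of Lemma~\ref{lem:expand}. The green subgraph assigns each vertex $k-1$ of its $k$ out-choices (the blue edge is deleted), so as an out-degree-$(k-1)$ structure we want $|\vec N^g(S)| \geq 2|S|$ rather than $3|S|$; after adjoining the in-edges this only helps. So I would run a first-moment/union-bound argument: the probability that there is a "bad" set $S$, $|S| = l \leq \alpha n$, and a set $T$ of size $2l-1$ with $N_{\G^g_{ij}}(S) \subseteq S \cup T$, is at most
\[
\sum_{l=1}^{\alpha n} \binom{n}{l}\binom{n-l}{2l-1}\prod_{v \in S}\left[\binom{3l-2}{k-1}\middle/\binom{d_G(v)}{k-1}\right],
\]
and the same manipulations as in Lemma~\ref{lem:expand} bound the summand by $\left[e^3(6e)^{k-1}(l/n)^{k-4}\right]^l$ (or a similar expression), which is $o(1)$ provided $k \geq 5$ and $\alpha$ is a small enough absolute constant. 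Note that deleting one out-edge per vertex is harmless for a lower bound on the out-neighborhood as long as $k-1 \geq 4$; we must be slightly careful that we want $|\vec N^g(S)|$ large, and a vertex's green out-edges are a uniformly random $(k-1)$-subset of its $k$-subset of random $G$-neighbors, which is the same as a uniformly random $(k-1)$-subset of $G$-neighbors of $v$, so the computation is clean.

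Then I would finish: condition on the expansion event, which holds w.h.p. Take any longest path $P$ in $\G^g_{ij}$ and any endpoint $x$, and let $S = EP(x)$. If $|S| \leq \alpha n$, expansion gives $|N_{\G^g_{ij}}(S)| \geq 2|S|$, contradicting Lemma~\ref{lem:posa}, which says $|N_{\G^g_{ij}}(S)| \leq 2|S| - 1$. (One should note $EP(x)$ is nonempty since $x$ itself is an endpoint of a longest path, so $|S| \geq 1$ and the inequalities are genuinely in conflict.) Hence $|EP(x)| > \alpha n$, as desired.

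The main obstacle is the bookkeeping in the expansion bound: making sure that the correct binomial coefficients appear once we pass to the green subgraph and that the exponent of $l/n$ in the final summand stays nonnegative (this is why $k \geq 5$, equivalently $k - 1 \geq 4$, is needed), and choosing $\alpha$ explicitly so that $e^3(6e)^{k-1}\alpha^{k-4} < 1$ uniformly. A secondary subtlety is that the statement quantifies over $\G^g_{ij}$ for a fixed coloring $j$; since the green subgraph is distributed exactly as a $(k-1)$-out subgraph of $G$, the bound is uniform in the coloring and "w.h.p." is unambiguous — but it is worth a sentence to say so. Everything else is a direct application of P\'osa's rotation argument.
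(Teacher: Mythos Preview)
Your argument is correct, and the endgame via P\'osa is the same as the paper's. The difference is in how you obtain the $2$-expansion of the green graph. The paper does not rerun the union bound for $k-1$; instead it observes that if $D_i$ expands (Lemma~\ref{lem:expand}, already proved with exponent $3$), then for \emph{every} coloring $j$ the green graph $\G^g_{ij}$ expands with exponent $2$, simply because deleting one out-edge per vertex of $S$ can remove at most $|S|$ vertices from $\vec N_i(S)$, so $|N_{i,j}(S)|\ge |\vec N_i(S)|-|S|\ge 2|S|$. This costs one line rather than a second first-moment calculation.

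Besides brevity, the paper's route buys something you should be aware of: it makes the implication ``$D_i$ expands $\Rightarrow$ $\G^g_{ij}$ expands'' deterministic, hence valid simultaneously for all $k^n$ colorings $j$. This matters downstream, where the argument sums over colorings and works inside the class $\cD'(k,n)$ of expanding digraphs; one wants the $\alpha n$ lower bound on $|EP(x)|$ to hold for every coloring of a fixed expanding $D_i$, not just w.h.p.\ for a single random coloring. Your version is fine as stated for a fixed $j$ (and your remark that the green graph is distributed as $G_{k-1}$ is correct), but if you later need the conclusion uniformly in $j$, you would have to note exactly the same one-line deduction the paper uses.
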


\begin{proof}
Let $H=\G^g_{ij}$. We argue that if $D_i$ expands then so does $H$. For $S\subseteq V$ let $N_{i,j}(S)=N_H(S)$. If $|\vec{N}_i(S)| \geq 3 |S|$, then $|N_{i,j}(S)| \geq 2|S|$, since each
vertex of $S$ picks at most one blue edge outside of $S$. Thus $H$ expands. In
particular, Lemma \ref{lem:expand} implies that if $|S| \leq \alpha
n$, then $|\vec{N}(S)| \geq 3|S|$ and hence $|N_{i,j}(S)| \geq
2|S|$. By Lemma \ref{lem:posa}, this implies that $|EP(x)| > \alpha n$
for any longest path $P$ and endpoint $x$.
\end{proof}

Let $M_1$ be the number of expanding digraphs $D_i$ among $D_1,...,D_M$ for
which $\G_i$ is not Hamiltonian. We aim to show that $M_1/M
\rightarrow 0$ as $n$ tends to infinity. W.l.o.g. suppose
$\mathcal{N}(k,n) = \{D_1,...,D_{M_1}\}$ are the expanding digraphs
which are not Hamiltonian. Define $a_{ij}$ to be $1$ if
$\G^g_{ij}$ contains a longest path of $\G_{ij}$, $1\leq i \leq M_1$, and $0$ otherwise.

\begin{lem}
For $1 \leq i\leq M_1$, we have $\sum_{j=1}^N a_{ij} \geq (k-1)^n$.
\end{lem}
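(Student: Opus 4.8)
The plan is to fix the index $i$ (so $D_i$ is an expanding, non-Hamiltonian digraph), fix a longest path $P=(v_1,v_2,\ldots,v_\ell)$ of $\G_i$, and then to exhibit at least $(k-1)^n$ of the $k^n$ colourings $D_{ij}$ whose green subgraph $\G^g_{ij}$ still contains a path on $\ell$ vertices; each such colouring has $a_{ij}=1$, which gives the bound.

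The mechanism I would use is a \emph{forbidden-arc assignment}: choose, for every vertex $v\in V$, a single out-arc $f(v)$ of $D_i$, in such a way that for every colouring in which no vertex selects its forbidden arc as its blue arc --- and there are exactly $(k-1)^n$ such colourings, since each vertex then has $k-1$ admissible choices for its blue arc --- the green subgraph $\G^g_{ij}$ contains a path on $\ell$ vertices. The naive way to build $f$ is to orient each edge $\set{v_t,v_{t+1}}$ of $P$ along one arc of $D_i$ realising it, and to set $f$ equal to that arc at its tail; if the $\ell-1$ tails so obtained are distinct then this is a consistent assignment, $P$ itself survives in $\G^g_{ij}$, and we extend $f$ arbitrarily on the remaining vertices.

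The obstruction --- and the step I expect to be the main difficulty --- is a \emph{bad} internal vertex $v_t$ of $P$: one for which the edge $\set{v_{t-1},v_t}$ is realised in $D_i$ only by the arc $(v_t,v_{t-1})$ and the edge $\set{v_t,v_{t+1}}$ only by $(v_t,v_{t+1})$, so that $v_t$ is forced to be the common tail of both path arcs. There may be $\Theta(\ell)$ such vertices (an alternating pattern of forced arc directions along $P$), and at each of them the naive assignment would have to forbid two arcs, contributing a factor $k-2$; since $\left(\tfrac{(k-2)k}{(k-1)^2}\right)^{\Theta(n)}\to 0$, a literal ``$P$ survives'' count falls short of $(k-1)^n$ and bad vertices must be dealt with by local rerouting.

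To handle a bad vertex $v_t$ I would forbid $v_t$ from only one of its two path arcs, say keep $f(v_t)=(v_t,v_{t-1})$, leaving $v_t$ with $k-1$ admissible blue arcs; the edge $\set{v_t,v_{t+1}}$ may then be destroyed, splitting $P$ into the two green subpaths $(v_1,\ldots,v_t)$ and $(v_{t+1},\ldots,v_\ell)$, which I would reconnect into a single $\ell$-vertex green path by forcing a suitable green chord between their endpoints. Here one uses that $v_t$ still keeps $k-1\ge 4$ green out-arcs (so a chord incident to $v_t$ can be made green without charging $v_t$ again), that $P$ is a longest path so its endpoints $v_1,v_\ell$ have all their $\G_i$-neighbours on $P$ and, via P\'osa rotations (Lemma~\ref{lem:posa}) together with the expansion of $D_i$, there are many alternative longest paths to reroute through, and that $k\ge 5$ leaves enough slack to choose the reroutes at different bad vertices disjointly --- for instance by sweeping $P$ from $v_1$ towards $v_\ell$ and only ever modifying $P$ locally. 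The delicate point is to carry this out so that $f$ remains a single out-arc per vertex while every destroyed path edge has a working reroute; once that is done the count $(k-1)^n$ is immediate, proving $\sum_{j=1}^N a_{ij}\ge (k-1)^n$.
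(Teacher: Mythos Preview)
Your diagnosis is sharper than the paper's own proof. The paper's argument is precisely the ``naive'' one you sketch in your first two paragraphs: fix a longest path $P$ of $\G_i$, pick $j$ uniformly, and assert
\[
\Pr\bigl(E(P)\subseteq E(\G^g_{ij})\bigr)\ \ge\ \Bigl(1-\tfrac1k\Bigr)^{|E(P)|}\ \ge\ \Bigl(1-\tfrac1k\Bigr)^{n},
\]
then multiply by $k^n$. No orientation of the path edges, no forbidden-arc assignment, no rerouting --- the displayed inequality is stated without further comment.

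Your objection to this step is correct. If some interior vertex $v_t$ is the tail of the \emph{only} arcs realising both $\{v_{t-1},v_t\}$ and $\{v_t,v_{t+1}\}$, the two events ``$\{v_{t-1},v_t\}$ is green'' and ``$\{v_t,v_{t+1}\}$ is green'' are negatively correlated: their intersection has probability $(k-2)/k$, whereas the product of the marginals is $((k-1)/k)^2$, and $k(k-2)<(k-1)^2$. A path in which the forced directions alternate (so that half the vertices are ``bad'' in your sense) shows that $\Pr(E(P)\subseteq E(\G^g_{ij}))$ can be smaller than $(1-1/k)^{|E(P)|}$ by a factor $\bigl(\tfrac{k(k-2)}{(k-1)^2}\bigr)^{\Theta(n)}$, so the inequality the paper writes is not valid in general. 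In this sense you have found a genuine gap in the paper's proof.

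Where you go astray is in the repair. The rerouting scheme you outline --- handling each bad vertex by locally splicing in a green chord while keeping one forbidden arc per vertex --- is vague, and it is not clear the P\'osa rotations can be coordinated across $\Theta(n)$ bad vertices without collisions. But none of this machinery is needed. The clean fix is simply to weaken the constant: every vertex is the tail of at most \emph{two} arcs along $P$, so for any choice of one realising arc per path edge, the number of colourings in which every chosen arc is green is
\[
\prod_{v\in V}(k-c_v)\ \ge\ (k-2)^n\qquad (c_v\in\{0,1,2\}),
\]
which is immediate. Replacing $(k-1)^n$ by $(k-2)^n$ in the lemma replaces the denominator $(1-1/k)^n$ by $(1-2/k)^n$ in the bound \eqref{eq:hambound}, and the final computation for Theorem~\ref{th2} goes through unchanged after doubling the constant $C$. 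So: right problem spotted, but the remedy is a one-line weakening, not a rerouting argument.
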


\begin{proof}
Fix $1 \leq i\leq M_1$ and a longest path $P$ of $\G_i$. Uniformly picking one of $D_{i1},...,D_{iN}$, we have
\begin{eqnarray*}
\Prob{a_{ij}=1} &\geq& \Probtext{$E(P) \subseteq E(\G^g_{ij})$} \\
&\geq& \left(1-\frac{1}{k}\right)^{|E(P)|} \\
&\geq& \left(1-\frac{1}{k}\right)^n
\end{eqnarray*}
The lemma follows from the fact that there are $k^n$ colorings of $D_i$.
\end{proof}

Let $\Delta\in \mathcal{N}(k-1,n)$ be expanding and non-Hamiltonian
and for the purposes of exposition consider its edges to be colored
green. Let $D \in \mathcal{D}(k,n)$ be the random digraph obtained by
letting each vertex of $\Delta$ choose another edge, which will be colored blue. Let $\overline{B_\Delta}$ be the event that $D$ contains a path longer than the longest path of $\Delta$ or if $D$ is Hamiltonian, implying that $a_{ij} = 0$. Since $G(D)$ is connected w.h.p., this event is the union of the events 
$$D \text{ has an edge between the endpoints of a longest path of }G(\Delta)$$
 and 
$$D \text{ has an edge from an endpoint of a longest path of $\Delta$ to the complement of the path}.$$ 
Let $B_\Delta$ be the complement of $\overline{B_\Delta}$ and for Hamiltionian $\Delta$ let $B_\Delta = \emptyset$.

Let $N_\Delta$ be the number of $i, j$ such that $\Delta_{ij}  = \Delta$. We have
\begin{equation}
\sum_{i,j:\Delta_{ij} = \Delta} a_{ij} = N_\Delta \Prob{B_\Delta}
\end{equation}
The number of non-Hamiltonian graphs is bounded by
\begin{eqnarray}
M_1 &\leq& \sum_{i=1}^M \sum_{j=1}^N \frac{a_{ij}}{(k-1)^n} \nonumber \\
&\leq& \frac{\sum_{\Delta} N_\Delta \Prob{B_\Delta}}{(k-1)^n} \nonumber \\
&\leq& \frac{Mk^n \max_\Delta \Prob{B_\Delta}}{(k-1)^n} \nonumber \\
&=& M \frac{\max_\Delta \Prob{B_\Delta}}{(1-1/k)^n}
\label{eq:hambound}
\end{eqnarray}

Fix a $\Delta\in \mathcal{N}(k-1,n)$. Let $EP$ be the set of vertices which are endpoints of a longest path of $\Delta$. For $x\in EP$, say $x$ is of Type I if $x$ has at least $\varepsilon n/2$ neighbors outside $P$, and Type II otherwise. Let $E_1$ be the set of Type I endpoints, and $E_2$ the set of Type II endpoints.

Partition the set of expanding green graphs by
\begin{equation}
\mathcal{D}'(k-1,n) = \mathcal{H}(k-1,n) \cup \mathcal{N}_1(k-1,n)\cup \mathcal{N}_2(k-1,n)
\end{equation}
where $\mathcal{H}(k-1,n)$ is the set of Hamiltonian graphs, $\mathcal{N}_1(k-1,n)$ the set of non-Hamiltonian graphs with $|E_1|\geq \alpha n/2$ and $\mathcal{N}_2(k-1,n)$ the set of non-Hamiltonian graphs with $|E_1| < \alpha n/2$.
\begin{lem}
For $\Delta \in \mathcal{N}_1(k-1,n)$, $\Prob{B_\Delta} \leq e^{-\varepsilon\alpha n/4}$.
\label{lem:n1bound}
\end{lem}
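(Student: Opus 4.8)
The plan is to use the extra, blue, edges to create a path longer than the longest path of $\Delta$. Fix $\Delta\in\mathcal{N}_1(k-1,n)$. By definition $|E_1|\ge \alpha n/2$, and for every $x\in E_1$ there is a longest path $P_x$ of $\Delta$ with $x$ as an endpoint such that $x$ has at least $\varepsilon n/2$ $G$-neighbours outside $V(P_x)$; put $W_x=N_G(x)\setminus V(P_x)$, so $|W_x|\ge \varepsilon n/2$. Recall that $D$ arises from $\Delta$ by having every vertex pick one further (blue) $G$-neighbour, the blue choices being mutually independent and independent of $\Delta$, and that $\overline{B_\Delta}$ occurs as soon as $G(D)$ contains a path longer than a longest path of $\Delta$.

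The crucial observation is that, since $P_x$ is obtained from a longest path by P\'osa rotations, all of its edges lie in $\Delta$, hence in $G(D)$; so if the blue arc leaving some $x\in E_1$ has head $w\in W_x$, then $P_x$ followed by the edge $\{x,w\}$ is a path of $G(D)$ with one more vertex than $P_x$, whence $\overline{B_\Delta}$ holds and $B_\Delta$ fails. (No use of connectivity of $G(D)$ is needed here, since the longer path is produced explicitly.) Letting $A_x$ be the event that the blue arc out of $x$ has head in $W_x$, we thus have $B_\Delta\subseteq\bigcap_{x\in E_1}\overline{A_x}$.

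It remains to bound $\Prob{A_x}$ from below and to observe independence. The blue arc of $x$ is uniformly distributed over the $d_G(x)-(k-1)$ neighbours of $x$ in $G$ not already chosen green, of which at least $|W_x|-(k-1)\ge \varepsilon n/2-(k-1)$ lie in $W_x$; since $d_G(x)\le n-1$ this gives $\Prob{A_x}\ge \varepsilon/2-o(1)$. The events $A_x$, $x\in E_1$, are determined by the blue choices of the distinct vertices $x\in E_1$ and are therefore independent, so
\[
\Prob{B_\Delta}\le\prod_{x\in E_1}\bigl(1-\Prob{A_x}\bigr)\le\Bigl(1-\tfrac{\varepsilon}{2}+o(1)\Bigr)^{|E_1|}\le\Bigl(1-\tfrac{\varepsilon}{2}+o(1)\Bigr)^{\alpha n/2}\le e^{-\varepsilon\alpha n/4},
\]
the last step being valid for $n$ large because $1-\varepsilon/2<e^{-\varepsilon/2}$ strictly.

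This is the easy case, so I do not expect a genuine obstacle; the only mild point requiring care is that a blue arc leaving $V(P_x)$ really does yield a longer path of $G(D)$, which rests on the (easily checked) fact that a path obtained from a longest path by rotations survives in $G(D)$, together with the harmless bookkeeping that the blue choice avoids the $k-1$ green choices of the same vertex, costing only an $o(1)$ in the probability estimate.
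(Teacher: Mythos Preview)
Your argument is correct and is essentially the paper's own proof: both bound $\Prob{B_\Delta}$ by the probability that every Type~I endpoint's blue arc lands inside its longest path, and use independence of blue choices across vertices to get $(1-\varepsilon/2)^{\alpha n/2}$. You are slightly more careful than the paper in two places---you make explicit that each $x\in E_1$ has its own path $P_x$, and you account for the $k-1$ green neighbours already used (costing an $o(1)$), which the paper silently absorbs into $d_P(x)/d_G(x)$---but the structure and idea are identical.
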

\begin{proof}
Let each $x \in E_1$ choose a neighbor $y(x)$. The event $B_\Delta$ is included in the event $\{\forall x \in E_1 : y(x) \in P\}$. We have
\begin{eqnarray*}
\Prob{B_\Delta} &\leq& \Prob{\forall x \in E_1: y(x) \in P} \\
&=& \prod_{x \in E_1}\frac{d_{{P}}(x)}{d_G(x)}\\
&\leq& \left(1 - \frac{\varepsilon}{2}\right)^{\alpha n/2}
\end{eqnarray*}
where $d_{P}(x)$ denotes the number of neighbors of $x$ inside $P$.
\end{proof}

\begin{lem}
For $\Delta \in \mathcal{N}_2(k-1,n)$, $\Prob{B_\Delta} \leq e^{-\varepsilon\alpha^2n/129}$.
\label{lem:n2bound}
\end{lem}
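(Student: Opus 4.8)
The plan is to run a P\'osa-rotation argument keyed to the abundance of endpoints furnished by Lemma~\ref{lem:greenexpand}, using the Type~II hypothesis to make \emph{each} such endpoint's blue edge ``useful'' with probability $\Omega(\e\alpha)$, and then to exploit the independence of the blue choices across vertices. Fix a longest path $P$ of $\Delta$ with endpoint $x_0$. By Lemma~\ref{lem:greenexpand} we have $|EP(x_0)|>\alpha n$, and since $\Delta\in\mathcal{N}_2(k-1,n)$ has $|E_1|<\alpha n/2$, at least $\alpha n/2$ vertices of $EP(x_0)$ are of Type~II; call this set $Y$. Replacing $P$ by a suitable rotated longest path we may also assume $x_0$ itself is of Type~II, at the cost of a constant factor. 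Write $U=V(P)$, $m=|U|<n$, and for $b\in Y$ fix a longest green path $Q_b=w_1\cdots w_m$ with $w_1=x_0$ and $w_m=b$; since $x_0,b$ are endpoints of a longest path of $\Delta$, all of their green edges lie in $U$, so the blue neighbour $y(b)$ chosen by $b$ is uniform over the $d_G(b)-(k-1)$ non-green $G$-neighbours of $b$.

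The core step is to attach to each $b\in Y$ a set $\mathrm{Good}(b)\subseteq N_G(b)$ such that $|\mathrm{Good}(b)|\ge c\e\alpha n$ for an absolute constant $c$, and such that $y(b)\in\mathrm{Good}(b)$ forces $D=\Delta+\{\text{blue edges}\}$ to contain a path longer than $P$ or a Hamilton cycle, i.e.\ to force $\overline{B_\Delta}$. The natural candidate collects those path-neighbours $w_i$ of $b$ for which the blue rotation along $w_mw_i$ creates a new endpoint $w_{i+1}$ that immediately yields an improvement --- either because $x_0w_{i+1}$ already closes a cycle on $U$ in $D$ (whence connectivity of $G(D)$ together with the non-Hamiltonicity of $\Delta$ gives a longer path), or because $w_{i+1}$ is the endpoint of a green longest path that rotates or extends to one. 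The lower bound $|\mathrm{Good}(b)|=\Omega(\e\alpha n)$ is exactly where $\delta(G)\ge(\tfrac12+\e)n$ enters: $b$ of Type~II has $\ge(\tfrac12+\tfrac\e2)n$ neighbours on $Q_b$, and the matching ``target'' set (built from $x_0$, also Type~II, and from an $EP$-set of size $>\alpha n$) also has $\ge(\tfrac12+\tfrac\e2)n$ elements; both lie inside $U$, which has at most $n$ vertices, so they overlap in $\gtrsim\e n$ vertices, while the $EP$-ingredient supplies the extra factor $\alpha$.

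Granting this, the bound is immediate. For $b\in Y$,
\[
\Pr\bigl[y(b)\in\mathrm{Good}(b)\bigr]\ \ge\ \frac{|\mathrm{Good}(b)|-(k-1)}{d_G(b)-(k-1)}\ \ge\ \frac{c\e\alpha}{2}
\]
for $n$ large, and $B_\Delta$ requires $y(b)\notin\mathrm{Good}(b)$ for every $b\in Y$. As these events concern the blue choices of distinct vertices they are independent, so
\[
\Pr[B_\Delta]\ \le\ \Bigl(1-\tfrac{c\e\alpha}{2}\Bigr)^{|Y|}\ \le\ \exp\Bigl(-\tfrac{c\e\alpha}{2}\cdot\tfrac{\alpha n}{2}\Bigr)\ \le\ e^{-\e\alpha^{2}n/129},
\]
once the constant $c$ is tracked (the gap between $128$ and $129$ absorbs lower-order terms). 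The main obstacle is the core step: isolating one blue edge out of $b$ whose success \emph{provably} produces a longer path. The difficulty is genuine, since a Type~II endpoint may have no neighbour outside $U$ at all and the rotation-endpoint sets of Lemma~\ref{lem:greenexpand} are only of size $\Omega(\alpha n)$, so most obvious improving configurations secretly require a \emph{second} lucky blue edge and hence have probability only $O(1/n)$; pinning down the configuration that works with a single blue edge, and verifying $|\mathrm{Good}(b)|=\Omega(\e\alpha n)$, is the technical heart, the rest being routine bookkeeping.
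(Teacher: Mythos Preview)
Your proposal has a genuine gap at exactly the point you flag as ``the technical heart.'' You posit, for each $b\in Y$, a set $\mathrm{Good}(b)\subseteq N_G(b)$ of size $\Omega(\e\alpha n)$, determined by the green graph $\Delta$ alone, such that the \emph{single} blue choice $y(b)\in\mathrm{Good}(b)$ already forces $\overline{B_\Delta}$. You never construct this set, and the natural candidates fail: the $G$-neighbours of $b$ outside $P$ may number zero (Type~II only says fewer than $\e n/2$), and the indices $i$ for which $\{x_0,w_{i+1}\}$ is a \emph{green} edge number only $O(k)$, since $x_0$ has bounded degree in $\Delta$. Requiring instead that $\{x_0,w_{i+1}\}\in E(G)$ gives $\Omega(\e n)$ indices by the overlap argument you sketch, but then closing the cycle still needs $x_0$'s blue edge to land on $w_{i+1}$---a second random choice. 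Your own diagnosis that ``most obvious improving configurations secretly require a second lucky blue edge'' is correct, and the sketch never escapes it; as written, the ``proof'' amounts to asserting that the hard step can be done.

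The paper does not try to avoid the second blue edge; it organises the argument so that two blue edges can be used while preserving independence. It splits the Type~II endpoints into a set $X$ of size $\alpha n/4$ and, for each $x\in X$, a set $A(x)\subseteq E_2\setminus X$ of size at least $\alpha n/4$. The blue edges of the vertices $y\in A(x)$ are exposed \emph{first}: whenever $y$ lands on a $z_i$ along $P_{x,y}$ with $\{x,z_{i+1}\}\in E(G)$ (probability $\ge\e$, by the Type~II overlap), the vertex $z_{i+1}$ is added to a random target set $B(x)$. Chernoff plus a birthday estimate give $|B(x)|\ge\e\alpha n/32$ except with probability $n\,e^{-\Omega(\e\alpha n)}$. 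Only \emph{then} are the blue edges of the $x\in X$ exposed; each hits $B(x)$ with probability at least $\e\alpha/32$, and independence across $X$ (legitimate because $X$ is disjoint from every $A(x)$) yields $(1-\e\alpha/32)^{\alpha n/4}$. The two-stage exposure with the partition $X\ \dot\cup\ \bigcup_x A(x)$ is the missing idea in your outline.
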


\begin{proof}
Let $X \subseteq E_2$ be a set of $\alpha n/4$ Type II endpoints. For each $x \in X$, let $A(x)$ be the set of Type II vertices $y \notin X$ such that a path from $x$ to $y$ in $\Delta$ can be obtained from $P$ by a sequence of rotations with $x$ fixed. By Lemma \ref{lem:greenexpand} we have $|A(x)| \geq \alpha n/4$ for each $x$, since $A(x) = EP(x) \setminus (E_1 \cup X)$.

Let $P_{x,y}$ be a path with endpoints $x \in X, y \in A(x)$ obtained from $P$ by rotations with $x$ fixed, and label the vertices on $P_{x,y}$ by $x=z_0, z_1,...,z_l = y$. Suppose $y$ chooses some $z_i$ on the path with its blue edge. If $\{z_{i+1}, x\} \in E(G)$, let $B_y(x) = \{z_{i+1}\}$. Write $v(y)$ for $z_{i+1}$. If $\{z_ {i+1}, x\} \notin E(G)$, or if $y$ chooses a vertex outside $P$, let $B_y(x) = \emptyset$.

\begin{figure}[h]
\centering
\includegraphics{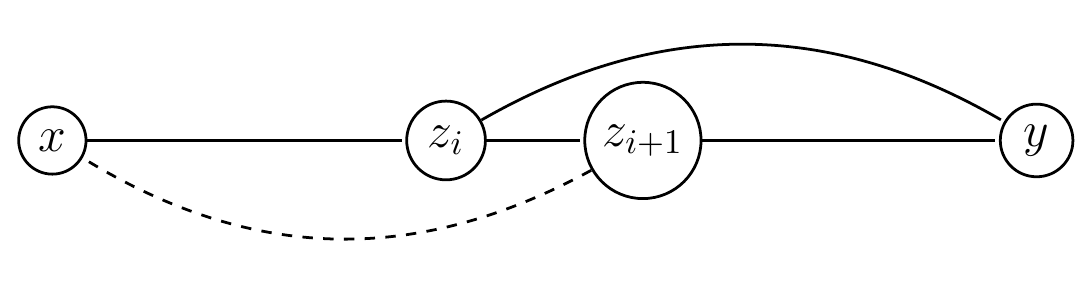}
\caption{Suppose $y$ chooses $z_i$. The vertex $z_{i+1}$ is included in $B(x)$ if and only if $\{x, z_{i+1}\} \in E(G)$.}
\end{figure}
There will be at least $2\brac{\frac12+\frac{\e}2}n-n=\e n$ choices for $i$ for which $\{x,z_{i+1}\} \in E(G)$. 
Let $Y_x$ be the number of $y \in A(x)$ such that $B_y(x)$ is nonempty. This variable is bounded stochastically from below by a binomial $\mbox{Bin}(\alpha n/4, \varepsilon)$ variable, and by a Chernoff bound we have that
\begin{equation}
\Prob{\exists x: Y_x \leq \frac{\e\alpha n}{8}} \leq n\exp\left\{-\frac{\varepsilon \alpha n}{32}\right\}
\end{equation}

Define $B(x) = \bigcup_{y \in A(x)} B_y(x)$. Conditional on $Y_x \geq \varepsilon \alpha n/8$ for all $x\in X$, let $y_1,y_2,...,y_r$ be $r = \varepsilon \alpha n/8$ vertices whose choice produces a nonempty $B_y(x)$. Let $Z_x = |B(x)|$, and for $i=1,...,r$ define $Z_i$ to be $1$ if $v(y_i)$ is distinct from $v(y_1),...,v(y_{i-1})$ and $0$ otherwise. We have $Z_x = \sum_{i=1}^r Z_i$, and each $Z_i$ is bounded from below by a Bernoulli variable with parameter $1-\alpha/8$. To see this, note that $y_i$ has at least $\varepsilon n$ choices resulting in a nonempty $B_{y_i}(x)$ since $x$ and $y_i$ are of Type II, so
\begin{equation}
\Prob{\exists j< i: v(y_j) = v(y_i)} \leq \frac{i-1}{\varepsilon n} \leq \frac{\varepsilon \alpha n/8}{\varepsilon n} = \frac{\alpha}{8}
\end{equation}
Since $\alpha/8 < 1/2$, $Z_x$ is bounded stochastically from below by a binomial $\mbox{Bin}(\varepsilon \alpha n/8, 1/2)$ variable, and so
\begin{equation}
\Prob{\exists x: Z_x < \frac{\e\alpha n}{32}} \leq n \exp\left\{-\frac{\varepsilon\alpha n}{128}\right\}
\end{equation}
Each $x$ for which $Z_x \geq \varepsilon \alpha n/32$ will choose a vertex in $B(x)$ with probability
\begin{equation}
\frac{|B(x)|}{d_G(x)} \geq \frac{\varepsilon \alpha n /32}{n} = \frac{\varepsilon \alpha}{32}
\end{equation}
Hence we have
\begin{equation}
\Prob{B_\Delta} \leq \left(1-\frac{\varepsilon \alpha}{32}\right)^{\alpha n/4} + n\exp\left\{-\frac{\varepsilon\alpha n}{32}\right\} + n\exp\left\{-\frac{\varepsilon \alpha n}{128}\right\}\leq e^{-\e\a^2n/129}.
\end{equation}
\end{proof}

We can now complete the proof of Theorem \ref{th2}. From Lemmas \ref{lem:n1bound} and \ref{lem:n2bound} we have
$$\Prob{B_\Delta} \leq \max\set{e^{-\e\a n/4},e^{-\e\a^2n/129},0}.$$
Going back to \eqref{eq:hambound} with $k = C/\varepsilon $ we have
\begin{eqnarray*}
\Probtext{$G_k$ is non-Hamiltonian} &=& \frac{M_1}{M} \\
&\leq& \frac{\max_\Delta \Prob{B_\Delta}}{(1-1/k)^n} \\
&=& \left[\frac{e^{-\e\a^2/129}}{1-\varepsilon/C}\right]^n \\
&\leq& \exp\left\{-\varepsilon\left(\frac{\alpha^2}{129} - \frac{2}{C}\right)n\right\}\\
&=&o(1),
\end{eqnarray*}
for $C = 259/ \alpha^2$.
\proofend

\section{Long Paths: Proof of Theorem \ref{th3}}
Let $D_k$ denote the directed graph with out-degree $k$ defined the vertex choices. Consider a Depth First Search (DFS) of $D_k$ where we construct $D_k$ as we go. At all times we keep a stack $U$ of vertices which have been visited, but for which we have chosen fewer than $k$ out-edges. $T$ denotes the set of vertices that have not been visited by DFS. Each step of the algorithm begins with the top vertex $u$ of $U$ choosing one new out-edge. If the other end of the edge $v$ lies in $T$ (we call this a \textit{hit}), we move $u$ from $T$ to the top of $U$.

When DFS returns to $v\in U$ and at this time $v$ has chosen all of its $k$ out-edges, we move $v$ from $U$ to $S$. In this way we partition $V$ into
\begin{itemize}
\item[$S$ - ] Vertices that have chosen all $k$ of its out-edges.
\item[$U$ - ] Vertices that have been visited but have chosen fewer than $k$ edges.
\item[$T$ - ] Unvisited vertices.
\end{itemize}
Key facts: Let $h$ denote the number of hits at any time and let $\k$ denote the number of times we have re-started the search i.e. selected a vertex in $T$ after the stack $S$ empties. 
\begin{enumerate}[{\bf P1}]
\item $|S\cup U|$ increases by $1$ for each hit, so $|S\cup U| \geq h$.
\item More specifically, $|S \cup U| = h + \k -1$.
\item $S\cup U$ contains a path which contains all of $U$ at all times.
\end{enumerate}

The goal will be to prove that $|U| \geq (1-2\varepsilon)m$ at some point of the search, where $\e$ is some arbitrarily small positive constant.

\begin{lem}
After $\varepsilon k m$ steps, i.e. after $\varepsilon k m$ edges have been chosen in total, the number of hits $h\geq (1-\varepsilon) m$ w.h.p.
\label{lem:hits}
\end{lem}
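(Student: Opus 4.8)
The plan is to track the stochastic process governing hits during the first $\e k m$ steps and show that misses are rare. At each step the active top vertex $u\in U$ chooses a fresh out-neighbor (without replacement among its $\le d_G(u)$ neighbors, of which it has used fewer than $k$). A step is a \emph{miss} only if the chosen neighbor lies in $S\cup U$ rather than in $T$. The key observation is that as long as we have not yet accumulated many hits, the set $S\cup U$ is small, so each individual choice is overwhelmingly likely to be a hit. Precisely: suppose that at the current step $|S\cup U|\le (1-\e)m$ (if this ever fails before step $\e k m$ we are already done, since $h\ge |S\cup U|-\k+1$ is within a lower-order term — though see the obstacle below). Then the active vertex $u$ has degree $\ge m$, has used at most $k-1$ of its edges, and has at most $|S\cup U|-1\le (1-\e)m$ of its neighbors in $S\cup U$; hence the conditional probability of a miss at this step is at most
\[
\frac{(1-\e)m}{m-(k-1)}\le 1-\e+o(1)\le 1-\tfrac{\e}{2}
\]
for $k=O(1)$ and $m\to\infty$. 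Wait — that bound is not strong enough; a constant miss probability over $\e k m$ steps gives far too many misses. So instead I would bound the miss probability by the fraction of $u$'s neighbors already \emph{committed}: at worst $|S\cup U|$ out of $\ge m$, but crucially we only run for $\e k m$ steps total, during which $|S\cup U|$ grows by exactly the number of hits so far.

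So the correct approach is a coupling/stopping-time argument. Let $h_t$ be the number of hits after $t$ steps. I would show that the number of misses $t-h_t$ in the first $\e k m$ steps is, w.h.p., at most $\e m$, which combined with $t=\e km$ forces $h_{\e k m}\ge \e k m-\e m=(k-1)\e m\ge (1-\e)m$ once $k\ge 1+1/\e$ — actually we want $h\ge(1-\e)m$, and $\e km - \e m \ge (1-\e)m \iff \e k \ge 1 \iff k\ge 1/\e$, so choosing $k_\e$ large handles this. To bound the misses: run the process only up to the first time $\sigma$ that $|S\cup U|$ reaches $\e m$ (if it never does within $\e k m$ steps, then in particular $h_t < \e m + \k$ which via P2 needs separate care, but generically $\k$ is small). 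Before time $\sigma$, every choice has miss probability at most $\frac{\e m}{m-k}\le 2\e$, uniformly, so the number of misses before $\min(\sigma,\e k m)$ is stochastically dominated by $\mathrm{Bin}(\e k m, 2\e)$, which has mean $2\e^2 k m$ and is $\le 3\e^2 k m$ w.h.p. by a Chernoff bound (valid since $m\to\infty$). Then at step $\min(\sigma,\e k m)$ we have $h = (\text{steps}) - (\text{misses}) \ge \sigma$-many... I would then argue $\sigma\le \e k m$ must occur (else too few hits contradict... ) and that after reaching $|S\cup U|=\e m$ we can restart the same argument on the larger set, iterating a bounded number of times; a cleaner route is to define the threshold at $(1-\e)m$ directly and show the total miss count stays $\le \e m$ throughout, using that miss probability is always $\le \frac{|S\cup U|}{m-k}\le 1-\e/2$ but then refining via the Chernoff bound applied to the indicator sequence with the running bound $|S\cup U|=h_t+\k-1$.

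The cleanest formulation I would actually write: Let $M_t$ be the number of misses among the first $t$ steps, so hits $=t-M_t$ and $|S\cup U|\le (t-M_t)+\k$. Stop at $\tau=\min\{t: |S\cup U|\ge (1-\e)m\}\wedge \e k m$. For $t<\tau$ the step-$t$ miss indicator is dominated by a Bernoulli$\big(\tfrac{(1-\e)m}{m-k}\big)$; since $\frac{(1-\e)m}{m-k}\to 1-\e$, for $m$ large this is $\le 1-\e/2$, but to get a genuinely small miss total I instead observe we only need $M_{\e k m}\le \e m$, and apply the Chernoff bound to a sum of $\e k m$ Bernoulli$(1-\e/2)$ variables — whose mean is $(1-\e/2)\e k m$, which is \emph{not} $\le\e m$. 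This shows the naive bound fails and confirms that the real content is that $|S\cup U|$, hence the miss probability, stays genuinely small for most of the run; the honest argument must exploit $|S\cup U|\le h_t+\k-1$ and that $\k$ is $O(1)$ w.h.p. (restarts are rare because $G_k$ has few components — or one simply notes the first phase reaches $|U|$ large before the first restart).

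\textbf{Main obstacle.} The genuine difficulty is controlling the feedback between hits and the miss probability: early in the process $|S\cup U|$ is tiny so hits are nearly certain, but we must rule out an unlucky stretch of misses that would both fail to grow $S\cup U$ and waste the step budget. I expect the resolution is a stopping-time plus Chernoff argument showing that before $|S\cup U|$ reaches, say, $\tfrac{\e}{2}m$ the miss probability is below $\e$, so misses accumulate slowly (dominated by $\mathrm{Bin}(\e k m,\e)$, mean $\e^2 k m$), which for suitable $k_\e$ and the accounting $t-M_t=h$ yields $h\ge(1-\e)m$; handling the number of restarts $\k$ (via P2) and the regime where $|S\cup U|$ has grown past the threshold before $\e k m$ steps are the remaining bookkeeping points.
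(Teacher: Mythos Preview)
Your proposal contains the right bound but you talked yourself out of it. You correctly computed that while $|S\cup U|\le (1-\e)m$ the hit probability at each step is at least $\e/2$ (equivalently, miss probability at most $1-\e/2$). You then dismissed this as ``not strong enough'' and spent the rest of the proposal chasing a bound of the form ``misses $\le \e m$'', which is indeed hopeless. But you never needed to bound misses; you need to bound \emph{hits}. With hit probability at least $\e/2$ over $\e k m$ steps, the number of hits stochastically dominates $\mathrm{Bin}(\e k m,\e/2)$, whose mean is $\e^2 k m/2$. For $k\ge 2/\e^2$ this mean is at least $m$, and a Chernoff bound gives $\Pr\{\mathrm{Bin}(\e k m,\e/2)\le (1-\e)m\}=o(1)$. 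That is the entire argument; the ``feedback'' you worried about is a non-issue once you realise $k$ is allowed to be of order $\e^{-2}$.

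The second genuine gap is your handling of $\k$. You repeatedly invoke P2 ($|S\cup U|=h+\k-1$) but never control $\k$, leaving it as a ``bookkeeping point''. The paper's device here is clean and you missed it: to restart, DFS must have exhausted a component, and every component of $G_k$ has at least $k$ vertices (since $\d(G_k)\ge k$), hence uses at least $k^2$ steps to fully explore. Thus after $\e k m$ steps at most $\e k m/k^2=\e m/k\le \e m/2$ restarts have occurred, so $h\le(1-\e)m$ forces $|S\cup U|\le(1-\e/2)m$, and the hit-probability lower bound of $\e/2$ holds throughout.
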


\begin{proof}
Since $\delta(G_k)\geq k$, each tree component of $G_k$ has at least $k$ vertices, and at least $k^2$ edges must be chosen in order to complete the search of the component. Hence, after $\varepsilon k m$ edges have been chosen, at most $\varepsilon km/k^2 \leq \varepsilon m/2$ tree components have been found. This means that if $h\leq(1-\varepsilon)m$ after $\varepsilon km$ edges have been sent out, then {\bf P2} implies that $|S\cup U| \leq (1-\varepsilon/2)m$. 

So if $h\leq (1-\varepsilon)m$ each edge chosen by the top vertex $u$ has probability at least $\frac{d(u)-|S\cup U|}{d(u)}\geq \e/2$ of making a hit. Hence,
\begin{equation}
\Probtext{$h\leq (1-\varepsilon)m$ after $\varepsilon km$ steps} \leq \Prob{\mbox{Bin}(\varepsilon km, \varepsilon/2) \leq (1-\varepsilon) m}=o(1),
\end{equation}
for $k\geq 2/\e^2$, by the Chernoff bounds.
\end{proof}

We can now complete the proof of Theorem \ref{th3}.
By Lemma \ref{lem:hits}, after $\varepsilon km$ edges have been chosen we have $|S \cup U| \geq (1-\varepsilon)m$ w.h.p. For a vertex to be included in $S$, it must have chosen all of its edges. Hence, $|S| \leq \varepsilon km/k = \varepsilon m$, and we have $|U| \geq (1-2\varepsilon)m$. Finally observe that $U$ is the set of vertices of a path of $G_k$.
\proofend

\ignore{\section{Long Cycles: Proof of Theorem \ref{th4}}
Suppose now that we consider $G=G_{2k}=R_k\cup B_k$ where each vertex makes $k$
``red'' choices and $k$ ``blue'' choices.  $R_k$ is the graph induced
by the red choices, $B_k$ is the graph induced by the blue choices. 


Our proof is an adaptation of the proof of Riordan \cite{Rio}. We first carry out DFS using only the red edges to create a forest $T$. There is a tree in $T_C$ for each component $C$ of $R_k$. Each $T_C$ has a root $\r_C$ corresponding to where the DFS of $C$ starts. A path in $T_C$ is said to {\em vertical} if it does not contain $\r_C$ as an interior vertex.

Let $v$ be a vertex of $T_C$. There is a unique vertical path from $v$
to $\r_C$. We write $\cA(v)$ for the set of ancestors of $v$, i.e.,
vertices (excluding $v$) on this path. We write $\cD(v)$ for the set
of descendants of $v$, again excluding $v$. Thus $w \in \cD(v)$ if and
only if $v \in \cA(w)$. The {\em distance} $d(u,v)$ between two
vertices $u$ and $v$ on a common vertical path is just their graph
distance along this path. It is zero if $u,v$ are not on the same
vertical path. We write $\c\ai(v)$ and $\cD_i(v)$ for the set of ancestors/descendants of $v$ at distance exactly $i$, and
$\cA_{\leq i}(v), \cD_{\leq i}(v)$ for those at distance at most
$i$. By the {\em depth} of a vertex we mean its distance from the
root. The {\em height} of a vertex $v$ is $\max\set{i : \cD_i(v)
  \neq\emptyset}$.

Let $U$ denote the set of edges of $G$ that join two vertices on some vertical path in $T$. 
We call a vertex $v$ {\em full} if it is incident with at least $(1-\e)m$ edges in $U$. In place of Corollary 4 of \cite{Rio} we have,
\begin{lem}\label{lemGp2}
Suppose that $k\geq 8A\e^{-2}$. Then with high probability, all but $\leq \e n$ vertices of $T$ are full.
\end{lem}
\proof
We expose the choices for $R_k$ as we do the DFS. By this we mean that examine the $G$-edges incident with the current vertex $u$ at the stack until either we grow the current path or we backtrack. We say that an edge of $G$ is {\em tested} if it is examined in this way. We note that an untested edge is necessarily in $U$. 

Fix some vertex $v$. Up until the first $k/2$ edges incident with $v$ have been added to $T$, the probability that a tested edge will be in $R_k$ is at least $\frac{k}{2Am}$, regardless of the previous history. This is because there are still at least $k/2$ of $v$'s choices to be accounted for. It follows that w.h.p. the number of edges tested by vertices before they reached degree $k/2$ in $R_k$ is at most $2\times\frac{2Amn}{k}$. (Examining more than this number will w.h.p. lead to $T$ having more than $n-1$ edges). Thus the number of vertices of degree at most $k/2$ in $T$ which are not full is at most $\frac{4Amn}{k\e m}$ w.h.p. Since there are at most $\frac{4n}{k}$ vertices of degree greater than $k/2$ in $T$, the lemma follows, given our bound on $k$.
\qed

Let a vertex $v$ be {\em rich} if $|D(v)|\geq \e m$ {\em poor} otherwise.
In place of Lemma 5 of \cite{Rio} we have
\begin{lem}\label{l5}
Let $G$ have $n$ vertices. Suppose that there are fewer than $\e_1n$ poor vertices, where $\e_1=\e^7$. Then for any constant $C>0$ the number of vertices at height $Cm$ is at least $\brac{1-\e_1-\frac{C\e_1}{\e}}n$.
\end{lem}
\begin{proof}
For each rich vertex $v$, let $P(v)$ be a set of $\e m$ descendants of $v$, obtained by choosing vertices of $D(v)$ one-by-one starting with those furthest from $v$. For every $w \in P(v)$ we have $D(w) \subseteq P(v)$, so $|D(w)| < \e m$, i.e., $w$ is poor. Consider the set $S_1$ of ordered pairs $(v,w)$ with $v$ rich and $w\in P(v)$. Each of the $\geq(1-\e_1)n$ rich vertices appears in at least $\e m$ pairs, so $|S_1|\geq(1-\e_1)\e mn$.

For any vertex $w$ we have $|\cA_{\leq i}(w)|\leq i$, since there is only one ancestor at
each distance, until we hit the root. Since $(v, w) \in S_1$ implies that $w$ is poor
and $v \in\cA(w)$, and there are only $\leq\e_1n$ poor vertices, at most $C\e_1m$
pairs $(v, w) \in S_1$ satisfy $d(v, w)\leq Cm$. Thus $S_1' = \set{(v, w) \in S_1 : d(v, w) > Cm}$ satisfies $|S_1'|\geq ((1-\e_1)\e-C\e_1)mn$. Since each vertex $v$ is the first vertex of at most $\e m$ pairs in $S_1 \supseteq  S_1'$, it follows that $\geq\brac{1-\e_1-\frac{C\e_1}{\e}}n$ vertices $v$ appear in pairs $(v, w) \in S_1'$. Since any such $v$ has height at least $Cm$, the proof is complete.
\end{proof}

Let us call a vertex $v$ {\em light} if $|D_{\leq (1-5\e)m}(v)|\leq (1-4\e)m$, and {\em heavy} otherwise.
Let $H$ denote the set of heavy vertices in $T$. In place of Lemma 6 of \cite{Rio} we have
\begin{lem}\label{lemGp4} 
Suppose that $T$ contains $\leq\e_1n$ poor vertices and let $X \subseteq V(T)$
with $|X| \leq \e n$. Then, for $k$ large enough, T contains a
vertical path $P$ of length at least $\e^{-2}k$ containing at most
$\e^2k$ vertices in $X\cup H$.
\end{lem}
\begin{proof} Let $S_2$ be the set of pairs $(u, v)$ where $u$ is an ancestor of $v$ and $0 <
d(u, v) \leq (1-5\e)m$. Since a vertex has at most one ancestor at any given distance, we have $|S_2| \leq (1-5\e)mn$. On the other hand, by Lemma \ref{l5} all but $\brac{1-\e_1-\frac{\e_1}{\e}}n$ vertices $u$ are at height at least $m$ and so appear in at least $(1-5\e)m$ pairs $(u,v)\in S_2$. It follows that only $\frac{(1-5\e)\brac{\e_1+\frac{\e_1}{\e}}}{\e}n$ vertices $u$ are in more than $(1-4\e)m$ such pairs, i.e., $|H| \leq\frac{(1-5\e)\brac{\e_1+\frac{\e_1}{\e}}}{\e}n $.

Let $S_3$ denote the set of pairs $(u,v)$ where $v \in X\cup H$, $u$
is an ancestor of $v$, and $d(u, v)\leq \e^{-2}m$. Since a given $v$
can only appear in $\e^{-2}m$ pairs $(u, v) \in S_3$, we see that
$|S_3|\leq \e^{-2}m|X\cup H|$. Hence only $\leq \frac{(1-5\e)\brac{\e_1+\frac{\e_1}{\e}}+\e}{\e^5}n$ vertices $u$ appear in more than $\e^2m$ pairs $(u,v) \in S_3$.

By Lemma \ref{l5}, all but $\brac{\e_1+\frac{\e_1}{\e^3}}n$ vertices are at height at least $\e^{-2}m$. Let $u$ be such a vertex appearing in at most $\e^2m$ pairs $(u,v) \in S_3$, and let $P$ be the vertical path from $u$ to some $v \in D_{\e^{-2}m}(u)$. Then $P$ has the required properties.
\end{proof}

We can now complete the proof of Theorem \ref{th4}. $(u,v)\in U$ implies that $u$ is either an ancestor or a descendant of $v$. By Lemma \ref{lemGp2}, we may assume that all but $\e n$ vertices are full. Suppose that there exists $v$ such that 
\begin{equation}\label{1}
|\set{u : d(u,v)\geq (1-5\e)m}|\geq \e m.
\end{equation}
Then, by using the blue edges incident with $v$, we see that
$$\Pr(G_k\text{ contains a cycle of length at least }(1-6\e)m)\geq
1-\brac{1-\frac{\e}{A}}^k\geq 1-\e.$$

Suppose then that \eqref{1} fails for every $v$. Suppose that some vertex $v$ is full but poor. Since $v$ has at most $\e m$ descendants, there are at least $(1-2\e)m$ pairs $(u,v) \in U$ with $u \in \cA(v)$. Since $v$ has only one ancestor at each distance, it follows that \eqref{1} holds for $v$, a contradiction.

We have shown that we can assume no poor vertex is full. Hence there are at most $\e n$ poor vertices, and we may apply Lemma \ref{lemGp4}, with $X$ the set of vertices that are not full. Let $P$ be the path whose existence is guaranteed by the lemma, and let $Z$ be the set of vertices on $P$ that are full and light, so $|V (P ) \setminus Z|\leq \e^2m$. For any $v \in Z$, since $v$ is full, there are at least $(1-\e)m$ vertices $u\in \cA(v) \cup \cD(v)$ with $(u,v) \in U$. Since \eqref{1} does not hold, at least $(1-2\e)m$ of these vertices satisfy $d(u, v)\leq (1-5\e)m$. Since $v$ is light, in turn at least $(2\e)m$ of these $u$ must be in $\cA(v)$. Recalling that a vertex has at most one ancestor at each distance, we find a set $U(v)$ of at least $\e m$ vertices $u\in \cA(v)$ with $(u,v)\in U$ and $\e m\leq d(u,v)\leq (1-5\e)m\leq  m$.

It is now easy to find a (very) long cycle w.h.p. Recall that $Z \subseteq  V(P)$ with
$|V(P) \setminus Z|\leq \e^2m$. Thinking of $P$ as oriented upwards towards the root, let $v_0$ be the lowest vertex in $Z$. Since $|U(v_0)|\geq \e m$, there is an $B_k$-edge $(u_0,v_0)$ with $u_0 \in U(v_0)$, with probability at least $1-\brac{1-\frac{\e}{A}}^k\geq 1-\e^2$. Let $v_1$ be the first vertex below $u_0$ along $P$ with $v_1 \in Z$. Note that we go up at least $\e m$ steps from $v_0$ to $u_0$ and down at most $1 + |V(P) \setminus Z|\leq 2\e^2m$ from $u_0$ to $v_1$, so $v_1$ is above $v_0$. Again with probability at least $1-\e^2$ there is a $B_k$-edge $(u_1,v_1)$ with $u_1 \in U(v_1)$, and so at least $\e m$ steps above $v_1$. Continue downwards from $u_1$ to the first $v_2 \in Z$, and so on. We may continue in this way to find overlapping ‘chords’ $(u_i,v_i)$ for $0\leq i \leq \rdown{2\e^{-1}}$, say. (Note that we remain within $P$ as each upwards step has length at most $m$.) These chords combine with $P$ to give a cycle of length at least $(1-2\e^{-1}\times2\e^2)m=(1-4\e)m$, as shown in Figure \ref{figp1x}. The probability of not finding the cycle being at most $\rdown{2\e^{-1}}\e^2\leq 2\e$.
}

\section{Long Cycles: Proof of Theorem \ref{th4}}

Suppose now that we consider $G_{4k}=LR_k\cup DR_k \cup LB_k\cup DB_k$ where each vertex makes $k$ choices each of the colors ``light red", ``dark red", ``light blue" and ``dark blue".  $LR_k, DR_k, LB_k, DB_k$ respectively are the graphs induced by the differently colored choices. We have by Theorem \ref{th3}
that w.h.p. there is a path $P$ of length at least $(1-\e)m$ in the
light red graph $LR_k$. At this point we start using a modification of DFS (denoted by
$\D\F\S$) and the differently colored choices to create a cycle.

We divide the steps into epochs $T_0,T_{00}, T_{01},\ldots$, indexed by binary strings. We stop the search immediately if there is a high chance of finding a cycle of length at least $(1-19\e)m$. If executed, epoch $\ti, \pmb{\iota} = 0***$ will extend the exploration tree by at least $(1-5\e)m$ vertices, unless an unlikely failure occurs. Theorem \ref{th3} provides $T_0$. In the remainder, we will assume $\pmb{\iota} \neq 0$.

Epoch $\ti$ will use light red colors if $i$ has odd length and ends in a $0$, dark red if $i$ has even length and ends in a $0$, light blue if $i$ has odd length and ends in a $1$, and dark blue if $i$ has even length and ends in a $1$. Epochs $T_{\pmb{\iota}0}$ and $T_{\pmb{\iota}1}$ (where $\pmb{\iota}j$ denotes the string obtained by appending $j$ to the end of $\pmb{\iota}$) both start where $\ti$ ends, and this coloring ensures that every vertex discovered in an epoch will initially have no adjacent edges in the color of the epoch.

During epoch $\ti$ we maintain a stack of vertices $\si$.
When discovered, a vertex is placed in one of the three sets $\ai, \bi, \ci$, and simultaneously placed in $\si$ if it is placed in $\ai$. Once placed, the vertex remains in its designated set even if it is removed from $\si$.  Let $d_T(v,w)$ be the length of the unique path in the exploration tree $T$ from $v$ to $w$. We designate the set for $v$ as follows.
\begin{itemize}
\item[$\ai$ -] $v$ has less than $(1-2\e)d(v)$ $G$-neighbors in $T$.
\item[$\bi$ -] $v$ has at least $(1-2\e)d(v)$ $G$-neighbors in $T$, but less than $\e d(v)$ $G$-neighbors $w$ such that $d_T(v,w) \geq (1-19\e)m$.
\item[$\ci$ -] $v$ has at least $(1-2\e)d(v)$ $G$-neighbors in $T$, and at least $\e d(v)$ $G$-neighbors $w$ such that $d_T(v,w) \geq (1-19\e)m$.
\end{itemize}

At the initiation of epoch $\ti$, a previous epoch will provide a set $\ti^0$ of $3 \e m$ vertices, as described below. Starting with $\ai = \bi = \ci = \emptyset$, each vertex of $\ti^0$ is placed in $\ai, \bi$ or $\ci$ according to the rules above. Let $\si = \ai$, ordered with the latest discovered vertex on top.

If at any point during $\ti$ we have $|\bi| = \e m$ or $|\ci| = \e m$, we immediately interrupt $\DFS$ and use the vertices of $\bi$ or $\ci$ to find a cycle, as described below.

An epoch $\ti$ consists of up to $\e km$ steps, and each step begins with a $v \in \ai$ at the top of the stack $\si$. This vertex is called \emph{active}. If $v$ has chosen $k$ neighbors, remove $v$ from the stack and perform the next step. Otherwise, let $v$ randomly pick one neighbor $w$ from $N_G(v)$. If $w\notin T$, then $w$ is assigned to $\ai, \bi$ or $\ci$ as described above. If $w \in \ai$, perform the next step with $w$ at the top of $\si$. If $w \in T$, perform the next step without placing $w$ in $\si$.

The exploration tree $T$ is built by adding to it any vertex found during $\DFS$, along with the edge used to discover the vertex.

Note that unless $|\bi| = \e m$ or $|\ci| = \e m$, we initially have $|\ai| \geq \e m$, guaranteeing that $\e km$ steps may be executed. Epoch $\ti$ \emph{succeeds} and is ended (possibly after fewer than $\e km$ steps) if at some point we have $|\ai| = (1-2\e)m$. If all $\e km$ steps are executed and $|\ai| < m$, the epoch fails.

\begin{lem}
Epoch $\ti$ succeeds with probability at least $1-e^{-\e^2m/8}$, unless $|\bi| = \e m$ or $|\ci| = \e m$ is reached.
\end{lem}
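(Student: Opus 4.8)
The plan is to rerun the depth-first-search counting argument from the proof of Lemma~\ref{lem:hits}, now keeping track of which of the three sets $\ai,\bi,\ci$ each newly discovered vertex falls into. Call an edge-choice during epoch $\ti$ a \emph{hit} if it lands on a vertex $w\notin T$; each hit puts exactly one new vertex into $\ai\cup\bi\cup\ci$, so if $h$ is the number of hits made so far in the epoch then $|\ai|+|\bi|+|\ci|=|\ti^0|+h=3\e m+h$ at all times. Condition from now on on the event that epoch $\ti$ is not interrupted, so $|\bi|<\e m$ and $|\ci|<\e m$ throughout. Since $|\ti^0\cap\ai|\ge\e m$ and at most $|\bi|+|\ci|<2\e m$ hits can ever have landed outside $\ai$, we get $|\ai|\ge \e m+(h-2\e m)=h-\e m$. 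Hence as soon as $h$ reaches $(1-\e)m$ we have $|\ai|\ge(1-2\e)m$, and since $|\ai|$ increases in unit steps it passes through the exact value $(1-2\e)m$ at that moment, i.e.\ the epoch succeeds. So it is enough to show that, on the no-interrupt event, the modified search makes at least $(1-\e)m$ hits within its budget of $\e km$ steps, except with probability at most $e^{-\e^2 m/8}$.

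Next I would follow the shape of Lemma~\ref{lem:hits}. Each of the $\e km$ steps of the epoch is either a pop (the active vertex has already used all $k$ of its $\ti$-colour choices) or an edge-choice; a vertex is popped at most once, and only vertices placed in $\ai$ ever sit on $\si$, so the number of pops is at most the final value of $|\ai|$, which stays below $(1-2\e)m<m$ as long as the epoch has not succeeded. Hence at least $\e km-m\ge\tfrac12\e km$ of the steps are edge-choices, provided $k\ge2/\e$. For the hit probability of such a step: the active vertex $v$ lies in $\ai$, so by the defining property of $\ai$ it had at least $2\e\,d_G(v)\ge2\e m$ of its $G$-neighbours outside $T$ at the moment it was classified; since epoch $\ti$ uses a colour not used before, $v$'s choices in that colour are independent of everything exposed so far, and an un-interrupted, un-succeeded epoch enlarges $T$ by fewer than $m$ further vertices, so one argues that $v$ still has a positive fraction of its $G$-neighbours outside $T$ and the choice is a hit with probability bounded below by a positive multiple of $\e$. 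The number of hits then stochastically dominates a binomial variable with mean of order $\e^2 km$, and for $k$ large enough in terms of $\e$ a Chernoff bound of the type used in Lemma~\ref{lem:hits} shows it falls short of $(1-\e)m$ with probability at most $e^{-\e^2 m/8}$.

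The step I expect to be the real obstacle is exactly this lower bound on the per-step hit probability. It is not enough to know that $v\in\ai$ when $v$ was discovered, since the exploration tree keeps growing while the epoch runs; one must show that the \emph{currently} active vertex still has a bounded-away-from-zero fraction of its $G$-neighbours outside $T$. This forces one to use the structural features of the construction — the dedicated fresh colour per epoch (making the epoch's choices independent of the tree already exposed), the DFS fact that everything added to $T$ since $v$ entered the stack lies in the subtree of $v$, and the hard ceilings $(1-2\e)m,\ \e m,\ \e m$ on $|\ai|,|\bi|,|\ci|$, which cap the growth of $T$ in an unterminated epoch. Turning these into a clean uniform lower bound on $|N_G(v)\setminus T|$, in particular when $d_G(v)$ is close to $m$, is the technical heart of the argument; once it is available, the remainder is the routine Chernoff estimate sketched above.
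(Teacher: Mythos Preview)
Your route is exactly the paper's: count hits, bound the per-step hit probability via the $\ai$-membership of the active vertex, then apply a binomial tail bound. The paper's proof is essentially one line — it asserts that because the active vertex always lies in $\ai$ the hit probability is at least $2\e$, and then bounds the failure probability by $\Pr(\mbox{Bin}(\e km,2\e)<(1-2\e)m)\le e^{-\e^2m/8}$ for $k\ge 1/(2\e^2)$, via Hoeffding.

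The difficulty you single out as ``the real obstacle'' — that the $\ai$-classification is frozen at discovery while $T$ keeps growing, so the $2\e$ lower bound on the \emph{current} hit probability is not automatic — is not addressed in the paper at all; the paper simply takes $v\in\ai$ at face value as giving hit probability $\ge 2\e$ at every step. So you are not overlooking a trick the paper supplies: the paper's argument is informal on precisely this point, and the structural bookkeeping you sketch (the fresh colour per epoch, the DFS subtree property, and the ceilings $(1-2\e)m,\e m,\e m$ on $|\ai|,|\bi|,|\ci|$) is what a fully rigorous version would have to use. Your separate accounting of pops versus edge-choices is likewise more careful than the paper, which lumps all $\e km$ steps into the binomial without comment. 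Your hit-counting in the first paragraph is correct; the paper reaches the equivalent threshold $h\ge(1-3\e)m$ by the slightly cruder inequality $|\ai|+|\bi|+|\ci|<m$ on the failure event.
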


\begin{proof}
An epoch fails if less than $(1-3\e)m$ steps result in the active vertex choosing a neighbor outside $T$. Since the active vertex is always in $\ai$, we have
\begin{equation*}
\Probtext{$\ti$ finishes with $|\ai| < (1-2\e)m$} \leq \Probtext{Bin$(\e km, 2\e) < (1-2\e)m$} \leq e^{-\e^2m/8}
\end{equation*}
for $k \geq 1/2\e^2$, by Hoeffding's inequality. This proves the lemma.
\end{proof}

The epoch produces a tree which is a subtree of $T$. Let $\pathi$ be the longest path of vertices in $\ai$, and let $\ri$ be the set of vertices discovered during $\ti$ which are not in $\pathi$. If the epoch succeeds, $\pathi$ has length at least $(1-6\e)m$, and at most $3\e m$ vertices discovered during $\ti$ are not on the path. Indeed, a vertex is outside $\pathi$ if and only if it is in $\ai$ and has chosen all its $k$ neighbors, or if it is in $\bi \cup \ci$. Thus, the number of vertices not on the path is bounded by
\begin{equation*}
|\ri| \leq \frac{\e km}{k} + |\bi| + |\ci| < 3\e m.
\end{equation*}
If the epoch fails, the path $\pathi$ may be shorter, but $|\ri|$ is still bounded by $3 \e m$.

If $\ti$ succeeds, the epochs $T_{\pmb{\iota}0}$ and $T_{\pmb{\iota}1}$ will be initiated at the end of $\ti$, by letting $T_{\pmb{\iota}0}^0$ and $T_{\pmb{\iota}1}^0$ be the last $3 \e m$ vertices discovered during $\ti$. If $\ti$ fails, $T_{\pmb{\iota}0}$ and $T_{\pmb{\iota}1}$ will not be initiated. The exploration tree $T$ will resemble an unbalanced binary tree, in which each successful epoch gives rise to up to two new epochs. Epochs are ordered after their binary value, so that $T_{\pmb{\iota}_1}$ is initiated before $T_{\pmb{\iota}_2}$ if and only if $\pmb{\iota}_1 < \pmb{\iota}_2$, ordered according to the numerical value of the binary strings.

\begin{lem}
W.h.p., $\DFS$ will discover an epoch $\ti$ having $|\bi| = \e m$ or $|\ci| = \e m$.
\end{lem}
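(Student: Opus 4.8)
The plan is to argue that the binary tree of epochs cannot keep branching forever without producing either an $|\bi|=\e m$ or an $|\ci|=\e m$ situation, and since the whole graph has only $n$ vertices, this must happen w.h.p. First I would observe that by the previous lemma each executed epoch succeeds with probability at least $1-e^{-\e^2m/8}$, and there are only polynomially many (in fact at most $2n/((1-6\e)m)$) epochs that can ever be executed before the exploration tree exhausts $V$, so w.h.p. \emph{every} executed epoch succeeds; condition on this event. Under this conditioning, each successful epoch $\ti$ with $\pmb{\iota}\neq 0$ adds a path $\pathi$ of length at least $(1-6\e)m$ to the exploration tree $T$, all but $3\e m$ of whose vertices lie in $\ai$, and it spawns two children $T_{\pmb\iota0},T_{\pmb\iota1}$ unless it is interrupted by reaching $|\bi|=\e m$ or $|\ci|=\e m$.

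The key combinatorial point is that the newly discovered vertices of distinct epochs are disjoint: epoch $T_{\pmb\iota j}$ begins where $\ti$ ended and, by the colouring rule, its vertices initially have no incident edge in the colour of $T_{\pmb\iota j}$, so the tree $T$ genuinely grows by roughly $(1-6\e)m$ fresh vertices per successful epoch. Hence after at most $O(n/m)$ successful epochs we would have $|V(T)|>n$, a contradiction. Therefore the process of spawning children cannot continue indefinitely, and since (conditioned on all epochs succeeding) the \emph{only} way an epoch fails to spawn two children is by being interrupted with $|\bi|=\e m$ or $|\ci|=\e m$, such an epoch must occur. Concretely: consider the branching process of epochs. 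If no epoch is ever interrupted, then every successful epoch has two successful children, so the number of epochs executed after $t$ rounds of branching is $\geq 2^t$, and already a bounded number of branching rounds forces more than $n$ vertices into $T$, impossible. So w.h.p.\ $\DFS$ reaches an epoch $\ti$ with $|\bi|=\e m$ or $|\ci|=\e m$.

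The main obstacle I anticipate is bookkeeping the disjointness claim carefully enough to get a clean upper bound on the number of epochs: one must check that the $3\e m$ ``seed'' vertices $\ti^0$ handed to a child epoch (which are \emph{not} fresh — they were discovered in the parent) do not cause double-counting, and that across the two children $T_{\pmb\iota0}$ and $T_{\pmb\iota1}$ of a common parent the sets of newly discovered vertices are still disjoint (this is exactly what the light/dark colour alternation is designed to guarantee, since a vertex placed in $T$ during $T_{\pmb\iota0}$ already has an incident tree edge and the search in $T_{\pmb\iota1}$, using a different colour class that that vertex has not yet sampled, treats it as already-visited rather than rediscovering it). Once disjointness is pinned down, the counting argument is immediate: at most $1/((1-6\e)m) \cdot n + O(1) = O(n/m)$ successful epochs can occur, so the depth of the epoch tree is $O(\log(n/m))$, and an uninterrupted full binary tree of that depth would overflow $V$; hence an interruption — i.e.\ an epoch with $|\bi|=\e m$ or $|\ci|=\e m$ — occurs w.h.p.
\proofend
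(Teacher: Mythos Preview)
Your overall strategy---the epoch tree must terminate because $V$ is finite, and if no epoch is ever interrupted then termination is impossible---is exactly the paper's. The difference is in how you control failures, and there your union bound has a genuine gap.

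You bound the number of executed epochs by $O(n/m)$ (this deterministic bound is correct: each successful epoch contributes at least $(1-5\e)m$ fresh vertices to $T$, and in the epoch tree leaves are at most one more than internal nodes). You then union-bound the probability that some epoch fails by $O\!\left(\frac{n}{m}\,e^{-\e^2 m/8}\right)$. But Theorem~\ref{th4} only assumes $m\to\infty$ with $n$; it does not assume $m\gg\log n$. If, say, $m=\log\log n$, your bound is of order $n(\log n)^{-c}$ and does not tend to $0$. So the sentence ``w.h.p.\ every executed epoch succeeds'' is not justified in the stated generality, and everything after it is conditioned on an event that need not have probability $1-o(1)$.

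The paper sidesteps this by \emph{not} asking all epochs to succeed. Assuming no epoch is interrupted, each epoch independently succeeds with probability at least $1-e^{-cm}$, so the epoch tree stochastically dominates a Galton--Watson process with offspring generating function $G_m(s)=(e^{-cm}+(1-e^{-cm})s)^2$. Its extinction probability $s_m=(e^{-cm}/(1-e^{-cm}))^2$ depends only on $m$ and tends to $0$ as $m\to\infty$. With probability $1-s_m$ the dominated process is infinite, forcing the epoch tree to be infinite, which contradicts your own finiteness bound. This yields a failure probability that is $o_m(1)$ regardless of how slowly $m$ grows with $n$. Your argument becomes correct if you replace the union bound step by this branching-process comparison; the counting you do afterwards is then already implicit in the contradiction ``infinite process vs.\ finite vertex set''.
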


\begin{proof}
Suppose that no epoch ends with $|\bi| = \e m$ or $|\ci| = \e m$. Under this assumption, we may model the exploration as a Galton-Watson branching process, in which a successful $\ti$ gives rise to at least $X_i$ successful epochs, where $X_i = 0$ with probability $e^{-2cm}$, $X_i=1$ with probability $2e^{-cm}(1-e^{-cm})$ and $X_i = 2$ with probability $(1-e^{-cm})^2$. The offspring distribution for this lower bound has generating function
\begin{equation*}
G_m(s) = e^{-2cm} + 2se^{-cm}(1-e^{-cm}) + s^2(1-e^{-cm})^2.
\end{equation*}
Let $s_m$ be the smallest fixed point $G_m(s_m) = s_m$. We have $s_m \rightarrow 0$ as $m\rightarrow \infty$. Hence, the probability that the branching process never expires is at least $1-s_m$, which tends to $1$.

The number of epochs is bounded by a finite number. Hence, the branching process cannot be infinite. This contradiction finishes the proof.
\end{proof}

We may now finish the proof of the theorem. Condition first on $\DFS$
being stopped by an epoch $\ti$ having $|\ci| = \e m$. In this case,
let each $v \in \ci$ choose $k$ neighbors using eges with the epoch's color. Each choice has probability at least $\e$ of finding a cycle of length at least $(1-19\e)m$, by choosing a neighbor $w$ such that $d_T(v,w) \geq (1-19\e)m$. The probability of not finding a cycle of length at least $(1-19\e)m$ is bounded by
\begin{equation*}
(1-\e)^{\e km}\to 0.
\end{equation*}

Now condition on $\DFS$ being stopped by an epoch $\ti$ having $|\bi|
= \e m$. Note that we must have $\pmb{\iota} = \pmb{\iota}'1$ for some
$\pmb{\iota}'$. Indeed, if $\pmb{\iota} = \pmb{\iota'}0$, then any $v$
discovered in $\pmb{\iota}$ must have at least $11\e d(v)$ $G$-neighbors
at distance at least $(1-19\e)m$, at its
time of discovery. If not, and $v\notin \ai$ then it has at most $2\e d(v)$ $G$-neighbors
outside $T$, at most $3\e d(v)+3\e d(v)$ $G$-neighbors in
$R_{\pmb{\iota}}\cup R_{\pmb{\iota}'}$. There are at most
$(1-19\e)d(v)$ $G$-neighbors in $T\setminus(R_{\pmb{\iota}}\cup R_{\pmb{\iota}'})$ at distance less than
$(1-19\e)d(v)$ and so there are at least $11\e d(v)$ $G$-neighbors in
$T$ at distance at least $(1-19\e)d(v)$ from $v$, which implies that
$v\in C_{\pmb{\iota}}$, contradiction. 

Note also that $d(v) \leq 2m$ for any $v \in \bi$. This can be seen as follows: For any $v\in W$ let $\rho_v \in \ti^0$ be the vertex which minimizes $d_T(v, \rho_v)$. Note that we may have $\rho_v = v$. There are at most $|Q|$ $G$-neighbors of $v$ on the path $Q$ from $v$ to $\r_v$. Then note that there are at most $2((1-19\e)m - |Q|)$ $G$-neighbors of $v$ on $T\setminus (Q\cup \ri\cup R_{\pmb{\iota}'}\cup R_{\pmb{\iota}'0})$  that are within $(1-19\e)m$ of $v$. So the maximum number of $w\in N_G(v)\cap T$ such that $d_T(v,w) \leq (1-19\e)m$ is bounded by
\begin{equation}\label{d(v)}
|Q| + 2((1-19\e)m - |Q|) + |\ri| + |R_{\pmb{\iota}'}|+|R_{\pmb{\iota}'0}|  \leq (2-29\e)m
\end{equation}
Equation \eqref{d(v)} then implies that $d(v)\leq (2-29\e)m+3\e d(v)$. 

Since the epoch produces a tree with at most $m$ vertices, using the pigeonhole principle we can choose a $W \subseteq \bi$ such that $|W| = \e^2 m$ and $d_T(v,w) \leq \e m$ for any $v,w\in W$. 

Define an ordering on $T$ by saying that $t_1 \leq t_2$ if $t_1$ was discovered before $t_2$ during $\DFS$, or if $t_1 = t_2$. If $S \subseteq T'$, and $t \leq s$ for all $s \in S$, write $t \leq S$. Similarly define $\geq, >$ and $<$.

\begin{figure}[h]
\centering
\includegraphics[width=120mm]{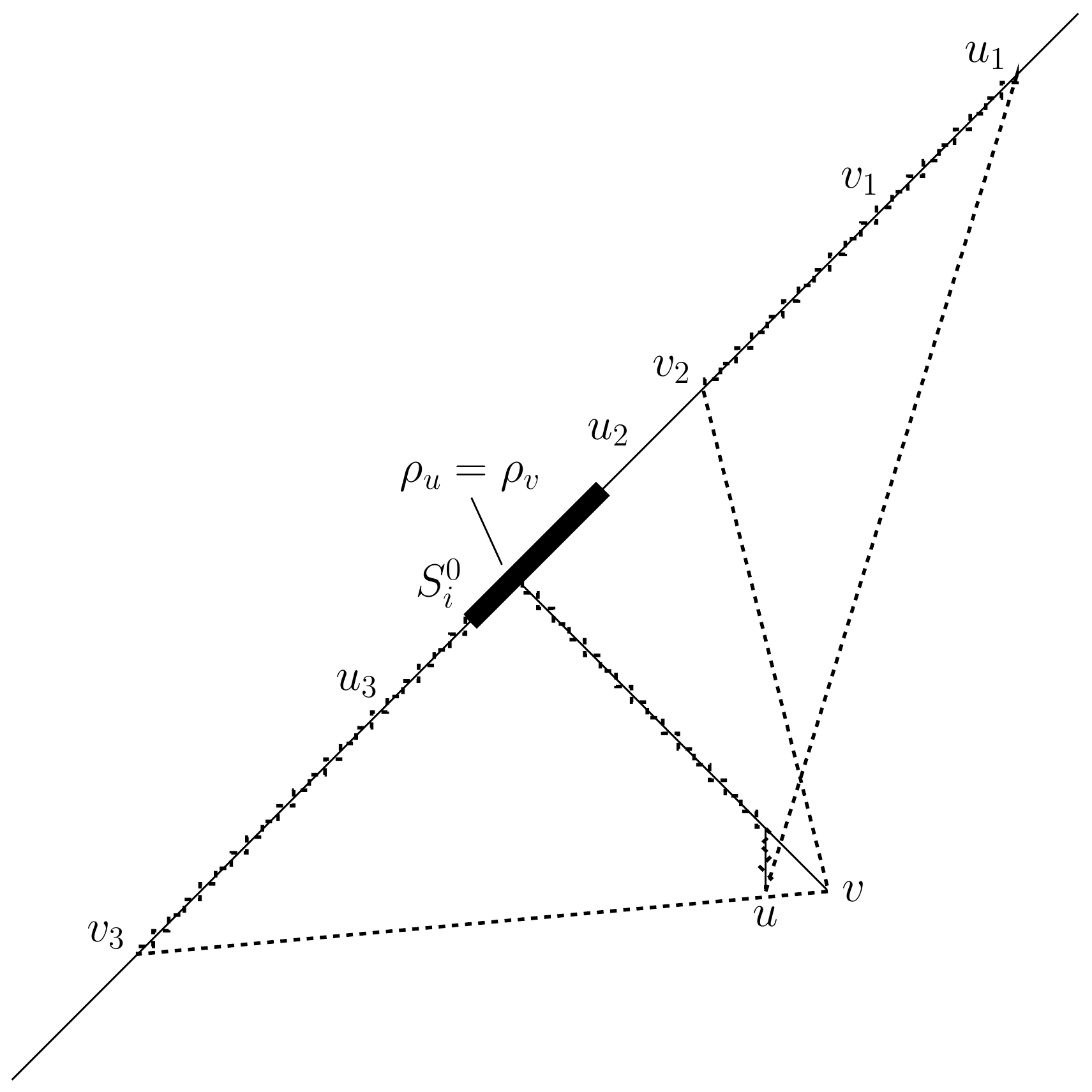}
\caption{Example depiction of cycle found when $|\bi| = \e m$.}
\end{figure}

Let each $v \in W$ choose $k$ neighbors in the color of epoch $\ti$. We say that $v$ is \emph{good} if it chooses $v_1,v_2 \in P_{\pmb{\iota}'}$ and $v_3 \in P_{\pmb{\iota}'0}$ such that
\begin{equation*}
d_T(v_1,v_2) + d_T(v_3,\ti^0) + d_T(\rho_v, v) \geq (1-17\e)m
\end{equation*}
where $d_T(v_3,S) = \min_{s \in S} d_T(v_3,s)$. For each $v\in W$ define $n_0(v) = |N_G(v) \cap \pathi \setminus \ti^0|$, $n_1(v) = |N_G(v) \cap P_{\pmb{\iota}'} \setminus \ti^0|$ and $n_2(v) = |N_G(v) \cap P_{\pmb{\iota}'0} \setminus \ti^0|$. Since $v\in \bi$ we have
\begin{equation*}
n_0(v) + n_1(v) + n_2(v) = |(N_G(v)\cap T) \setminus (R_{\pmb{\iota}'} \cup R_{\pmb{\iota}'0} \cup \ri \cup \ti^0)| \geq (1-14\e)m.
\end{equation*}
Since the $n_0(v) + n_1(v)$ vertices of $N_G(v) \cup \pathi \cup P_{\pmb{\iota}'} \setminus \ti^0$ are on a path, we must have $n_0(v) + n_1(v) \leq (1-16\e)m$, otherwise $v$ has $2\e m \geq \e d(v)$ neighbors at distance at least $(1-18\e)m$, contradicting $v\in\bi$. This implies $n_2(v) \geq 2\e m$. Similarly, $n_1(v) \geq 2\e m$.

Fix a vertex $v\in W$ and define $V_1, V_2 \subseteq (N_G(v) \cap P_{\pmb{\iota}'}) \setminus \ti^0$ and $V_3 \subseteq (N_G(v) \cap P_{\pmb{\iota}'0}) \setminus \ti^0$, $|V_1| = |V_2| = |V_3| = \e m$ as follows. $V_1$ is the set of the first $\e m$ vertices of $N_G(v) \cap P_{\pmb{\iota}'}$ discovered during $\DFS$. $V_2$ is the set of the last $\e m$ vertices of $N_G(v) \cap P_{\pmb{\iota}'}$ discovered before any vertex of $\ti^0$. Lastly, $V_3$ consists of the $\e m$ last vertices discovered in $N_G(v) \cap P_{\pmb{\iota}'0}$. Since $n_1(v)\geq 2 \e m$ and $n_2(v) \geq 2 \e m$, the sets $V_1,V_2,V_3$ exist and are disjoint.

Since $d(v) \leq 2m$, the probability that $v$ chooses $v_1 \in V_1, v_2 \in V_2$ and $v_3 \in V_3$ is at least $(\e/2)^3$. If this happens, we have
\begin{equation*}
d_T(v_1,v_2) + d_T(v_3,\ti^0) + d_T(\rho_v, v) \geq n_1(v) - 2\e m + n_2(v) - \e m + n_3(v) \geq (1-17\e) m.
\end{equation*}
In other words, $v \in W$ is good with probability at least $(\e/2)^3$. Since $|W| = \e^2 m$, w.h.p. there exist two good vertices $u,v \in W$. Since $u, v\notin \pathi$, the shortest path from $\rho_v$ to $v$ does not contain $u$, and the shortest path from $\rho_u$ to $u$ does not contain $v$. Also, by choice of $W$ we have $d_T(\rho_u,u) \geq d_T(\rho_v,v) - 2\e m$. Suppose $u$ and $v$ pick $u_1 \leq u_2 \leq u_3$ and $v_1 \leq v_2 \leq v_3$, and w.l.o.g. suppose $d_T(u_1,v_2) \geq d_T(v_1,v_2)$. The cycle $(u, u_1,...,v_2,v,v_3,...,\rho_u,..., u)$ has length
\begin{eqnarray*}
&& 1 + d_T(u_1,v_2) + 1 + 1 + d_T(v_3,\rho_u) + d_T(\rho_u,u) \\
&\geq& d_T(v_1,v_2) + d_T(v_3,\ti^0) + d_T(\rho_v,v) - 2\e m \\
&\geq& (1-19\e)m.
\end{eqnarray*}

\end{document}